\documentclass[a4paper,french,12pt]{amsbook}
\usepackage[protrusion=true,expansion=true]{microtype}
\usepackage{epsfig}
\usepackage{graphicx}
\usepackage{bourbaki}
\usepackage[all]{xy}
\usepackage[utf8]{inputenc}
\usepackage[T1]{fontenc}
\usepackage[francais]{babel}
\usepackage{textcomp}%
\usepackage{eucal}
\usepackage{mathrsfs}
\usepackage{amssymb}
\usepackage{amsxtra}
\usepackage{enumerate}
\makeatletter
\let\@@seccntformat\@seccntformat
\renewcommand*{\@seccntformat}[1]{%
  \expandafter\ifx\csname @seccntformat@#1\endcsname\relax
    \expandafter\@@seccntformat
  \else
    \expandafter
      \csname @seccntformat@#1\expandafter\endcsname
  \fi
    {#1}%
}
\newcommand*{\@seccntformat@section}[1]{%
  \S\csname the#1\endcsname\quad
}
\makeatother

\makeatletter
\renewcommand{\tocsection}[3]{%
  \indentlabel{\@ifnotempty{#2}{\S~\ignorespaces#1 #2.\quad}}#3}
\makeatother

\addto\captionsfrench{

}

\makeatletter
\@addtoreset{section}{part}
\@addtoreset{proposition}{section}
\@addtoreset{theoreme}{part}
\makeatother

\input cyracc.def 
 
\setcounter{tocdepth}{1}

\newtheorem{theoreme}{Théorème}[chapter]
\newtheorem{proposition}{ Proposition} 
\newtheorem{lemme}{ Lemme}
\newtheorem{corollaire}{Corollaire}

\newtheorem{definition}{Définition}

\theoremstyle{remark}

\newtheorem{exemple}{\it Exemple}

\def \1{\mathbb {1}}
\def \RM{\mathbb {R}}
\def \NM{\mathbb{N}}
\def \ZM{\mathbb{Z}}
\def \CM{\mathbb{C}}

\def \QM{\mathbb{Q}}

\def \Aut {{\rm Aut\,}}

\def \p {{\rm exp\,}}
\def \Id {{\rm Id\,}}

\def \d{\partial}
 
\def\a{\alpha}
\def\b{\beta}
\def\e{\varepsilon}  
\def\g{\gamma}

\def\l{\lambda}

\def\p{\varphi}

\def\G{\Gamma}   
\def\D{\Delta}
\def \s{\sigma}

\def \to{\longrightarrow} 
\def \w{\wedge}
\def \alg{\mathfrak{g}}

\def \< {{\langle }}
\def \> {{\rangle }}
\def \( {\left( }
\def \) {\right) }

\newcommand{\Bt}{{\mathcal B}}
\newcommand{\Ct}{{\mathcal C}}

\newcommand{\Ft}{{\mathcal F}}

\newcommand{\Lt}{{\mathcal L}}
\newcommand{\Mt}{{\mathcal M}}

\newcommand{\Ot}{{\mathcal O}}

\newcommand{\Rt}{{\mathcal R}}

\renewcommand{\mod}{{\rm  mod\,}}
\parindent=0.3cm
\parskip=0.15cm
\title[Th\'eorie KAM]{{\sc  Th\'eorie KAM}\\}
\author{ Mauricio  Garay }
 \begin{document}
 \begin{center}{\LARGE \sc Th\'eorie KAM } \vskip1cm {\Large Mauricio Garay}
 
 \vskip1cm
  \begin{figure}[ht]
  \begin{minipage}[b]{0.48\linewidth}   
     \centerline{\epsfig{figure=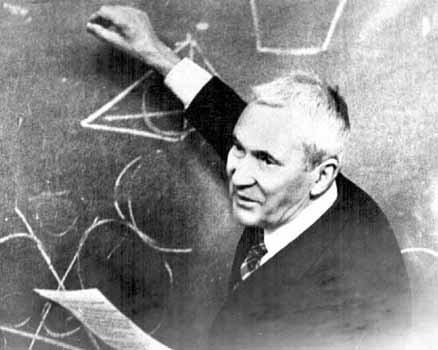,height=0.6\linewidth,width=0.75\linewidth }}
  \end{minipage}
   \begin{minipage}[b]{0.60\linewidth}
     \centerline{\epsfig{figure=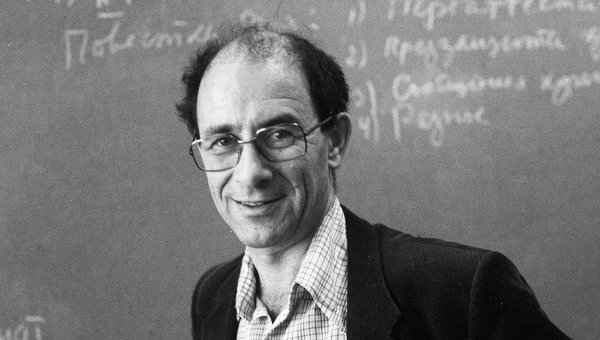,height=0.42\linewidth,width=0.75\linewidth }}
        \end{minipage}\hfill
         \begin{minipage}[b]{0.40\linewidth}
    \centerline{\epsfig{figure= 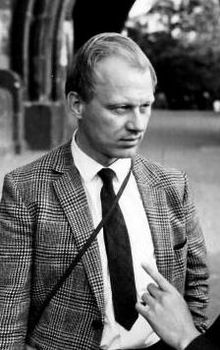,height=0.9\linewidth,width=0.6\linewidth}}
     \end{minipage}
  \end{figure}

  \end{center}
 \tableofcontents

\newpage
\ \\
\thispagestyle{empty}
\vskip3cm
 {\em Ces notes  sont issues d'un cours donné  à l'Université de Ouargla (Algérie) du 08 au 14 Décembre 2013 lors d'une \'Ecole d'hiver en géométrie. Je remercie M. Bahayou et A. Zeglaoui pour cette invitation ainsi que tout ceux qui ont eu la patience de suivre ces exposés, en particulier  Zahia Fernane, Mohamed Kessi, François Laudenbach, Djamel Smai et Nesrine Yousfi. Merci aussi à Duco van Straten et à Jacques Féjoz pour leur aide.}
 
\chapter*{Introduction}
{\em La première chose à comprendre en sciences et en mathématiques pour faire les premiers pas, c'est de comprendre que l'on comprend très peu!.. Ce n'est vraiment pas simple de comprendre ça.\\}
 Misha Gromov, {\em Entretien radiophonique.}\\
 
Dans le mouvement képlerien,  les planètes décrivent des ellipses parfaites. Tel les rouages d'une horloge, chaque planète possède son orbite, dans un mouvement dont l'harmonie a toujours surpris l'être humain.

Pourtant, ce mécanisme n'est  qu'une approximation, car il ne tient pas compte des influences mutuelles qu'exercent les planètes les unes avec les autres. Lorsqu'au XVIII\up{e}, les mathématiciens énoncèrent la loi Newton à l'aide du calcul différentiel, c'est-à-dire sous la forme que nous les connaissons aujourd'hui, ils se heurtèrent au problème posé par ces perturbations. Celles-ci pourraient-elles entraîner au cours des années des modifications significatives de leur trajectoire ?  

La situation atteint une sorte de paroxysme lorsqu'en 1889, Poincaré démontra que les séries utilisées par les astronomes pour calculer les déviations  aux trajectoires képlerienne étaient divergentes~:  l'influence de petites perturbations donnaient une contribution infinie~!  Quelque chose d'à peine perceptible répété à l'infini finissait par détruire le mouvement harmonieux des astres. Ce phénomène fut interprété comme une manifestation du chaos et une confirmation des hypothèses de la mécanique statistique.  
 
Nous serions tel des êtres microscopiques à l'existence éphémère qui vivant au milieu d'un gaz auraient l'impression d'un mouvement très régulier alors qu'il est en réalité très désordonné. Telle était la vision que pouvait avoir mathématiciens et astronomes du début du vingtième siècle.

Il suffit d'une note de quelques pages pour qu'en 1954, Andreï Kolmogorov bouleverse ce point de vue. Il découvrit que les séries des astronomes n'étaient pas totalement divergentes~: il existait certaines trajectoires particulières pour lesquelles elles pourraient converger. Ces trajectoires se confineraient alors  à un anneau autour de l'ellipse képlerienne. Ainsi, les déviations se compenseraient et les irrégularités constatées par les astronomes pourraient bel et bien se compenser.  Kolmogorov exposa ses résultats au congrès international des mathématiciens de 1954, mais personne ne semblait intéressé par cette découverte. Kolmogorov, lui-même, se tourna rapidement vers d'autres horizons.
  
  Il fallut attendre presqu'une décennie, pour qu'en 1963, un jeune élève de Kolmogorov,  Vladimir Arnold  tente d'appliquer la découverte théorique de son maître au problème du mouvement de la lune. Il découvrit que depuis 1954 aucune démonstration du théorème de Kolmogorov n'avait été établie! Certes, Kolmogorov en avait donné l'esquisse, il avait même donné un algorithme sur lequel se fondait son approche. Mais à aucun moment il n'avait montré que celui-ci convergeait.

En fait, Arnold ne réussit pas à compléter la démonstration de Kolmogorov et, il démontra un résultat beaucoup plus fort, également annoncé par Kolmogorov~: au fur et à mesure que la perturbation diminue, presque toutes les trajectoires restent confinées au voisinage de l'ellipse keplérienne. 

La même année, un autre mathématicien, Jürgen Moser mit au point une technique de démonstration générale pour les problèmes perturbatifs, comme celui rencontré par Kolmogorov. La théorie KAM était née.

Cette naissance a donc précédé celle de toutes les autres théories de déformations et d'espaces de modules alors qu'elle présentait de nombreuses difficultés~: aspect fortement non-linéaire, espaces de modules totalement discontinus etc. L'absence de concepts fondamentaux  a transformé la théorie KAM en une branche technique de l'analyse. 

Pourtant les développements de la topologie et de la géométrie algébrique au siècle dernier nous ont montré que l'on ne saurait se contenter de définir des ensembles (espaces vectoriels, espaces topologiques), il est nécessaire d'introduire des catégories. Ce qui joue un véritable rôle ce n'est pas l'objet, mais le morphisme. L'essence de la théorie KAM et plus généralement de la théorie des perturbations résiderait dans l'étude de certaines catégories d'espaces vectoriels qui vont au-delà de la simple analyse des espaces de Banach ou des chaînes d'espaces de Banach.

     \chapter{Le th\'eor\`eme de Kolmogorov}
    \section{Champs de vecteurs hamiltoniens}

  Considérons une variété symplectique analytique réelle $(M,\omega)$, c'est-à-dire une variété analytique réelle munie d'une deux-forme $\omega$ analytique telle que le produit intérieur par $\omega$ donne un isomorphisme entre fibré tangent et cotangent~:
   $$TM \to T^*M,\ X \mapsto i_X \omega. $$ 
   Cet isomorphisme permet d'associer à chaque forme différentielle un unique champ de vecteur. Etant donnée une fonction analytique
 $$ H:M \to \RM,$$
 le champ associé à la $1$-forme $dH$ s'appelle le champ {\em hamiltonien} de $H$. Le {\em flot de $H$} est par définition le flot de son champ hamiltonien. 
 
 Le lemme de Darboux dit que toute variété symplectique admet le modèle local $(\RM^{2n},\sum_i dq_i \w dp_i)$. L'isomorphisme entre fibré tangent et cotangent est alors donné par
  $$\d_{q_i} \mapsto dp_i,\ \d_{p_i} \mapsto -dq_i. $$
  Le champ hamiltonien de $H$ est alors
  $$X_H:=\sum_{i=1}^n(\d_{p_i} H \d_{q_i}-\d_{q_i} H \d_{p_i}). $$
Dans ces coordonnées, le champ de vecteur correspond bien aux {\em équations différentielles de Hamilton} enseignées dans les cours de mécanique~:
  $$\left\{ \begin{matrix}\dot q_i&=& \d_{p_i} H \\
  \dot p_i&=& -\d_{q_i} H \end{matrix} \right. $$
 
 \section{Int\'egrales premi\`eres}
 \'Etant données deux fonctions
  $$f,g:M \to \RM, $$
  on peut leur associer un crochet de Poisson définit par
  $$\{ f,g \}=\omega(X_f,X_g). $$
   On peut parler de façon indifférenciée du champ hamiltonien de $H$ ou de la dérivation $\{ H,-\}$.
  
  Dans des coordonnées de Darboux, le crochet de Poisson est donné par
  $$\{ f,g \}=\sum_{i=1}^n\d_{p_i} f \d_{q_i}g-\d_{q_i} f \d_{p_i}g.  $$
Plus généralement, une bidérivation est appelée un {\em crochet de Poisson} si elle vérifie l'identité de Jacobi. 

Une $B$-algèbre $A$ munie d'un crochet de Poisson est une {\em algèbre de Poisson}. Un automorphisme est dit de Poisson s'il préserve la structure de Poisson. Par exemple,  la formule précédente définit une structure de Poisson sur $A=\CM[t,q,p]$ qui est $B=\CM[t]$ linéaire. On dira que cette structure est {\em induite} par celle de $\CM[q,p]$.  

  Une quantité $f:M \to \RM$ est préservée par le flot de $H$ si sa dérivée de Lie est nulle, ce qui s'exprime par l'annulation du crochet de Poisson avec $H$.
  $$L_{X_H}f=0 \iff \{ H,f \}=0. $$
  Une telle quantité est appelé une {\em intégrale première}. Par exemple, si $M=\RM^2$ et $H=p$, les seules intégrales premières sont les fonctions de $p$.

Dans tout système hamiltonien, les fonctions de l'hamiltonien  sont trivialement préservées par le flot. En effet, comme le crochet de Poisson est antisymétrique on a~:
  $$ L_{X_H}H=\{ H,H \}=0$$
  et plus généralement $L_{X_H}f(H)=0$.  Dans ses {\em méthodes mathématiques de la mécanique céleste}, Poincaré démontra que, génériquement, ce sont les seules intégrales premières.
  
  Obtenir une intégrale première d'un système hamiltonien revient donc à confiner une solution dans une certaine partie de l'espace. Le théorème de Poincaré semble indiquer donc qu'une particule peut {\em a priori } librement se mouvoir sur sa surface d'énergie sans contrainte. Au début du XX\up{e} siècle, Fermi démontra ce résultat : {\em Pour la plupart des systèmes hamiltoniens, les seules intégrales premières sur un niveau d'énergie sont des fonctions de l'hamiltonien.}
  
Exprimons  la condition de Fermi algébriquement. Une quantité $G$ est préservée sur le niveau d'énergie $H=0$ pourvu que  $\{ H,G \}$ soit une fonction de $H$. On peut le traduire par l'existence d'une fonction $f$ telle que
  $$\{H,G\}=f(H). $$
  Poussant un peu plus loin l'idée de Fermi que l'on est conduit à la théorie KAM.
   
\section{  Id\'eaux invariants }
Les théorèmes de Fermi et de Poincaré sont de nature négative. Ils montrent que sur une hypersurface d'énergie, il n'existe pas d'hypersurface invariante. L'idée de Kolmogorov est de considérer des variétés invariantes de dimension inférieure. 

En géométrie  algébrique, on associe à chaque variété, l'idéal des fonctions qui s'annule sur celle-ci et il est plus pratique d'adopter ce langage algébrique. Celui-ci permet, entre autre, de considérer  des anneaux de séries formelles, de séries analytiques ou de polynômes et d'inclure  le cas  des variétés singulières.

Soient
$$f_1,\dots,f_k:M \to \RM $$
des fonctions analytiques définissant un idéal $I$. \`A cet idéal, on peut associer la variété  (non nécessairement lisse)~:
$$V(I)=\{x \in M:f_1(x)=\dots=f_k(x)=0 \}. $$
On dit que $I$ est {\em radical } si toute fonction s'annulant sur $V(I)$ appartient à $I$. La proposition suivante nous permet d'algébriser la notion de variété invariante~:
\begin{proposition}
Si l'idéal $I$ est radical  alors les assertions suivantes sont équivalentes
\begin{enumerate}[{\rm i)}]
\item $\{ H,I \} \subset I$ ;
\item $V(I)$ est invariante par le flot de $H$.
\end{enumerate}
\end{proposition}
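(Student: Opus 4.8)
This proposition is local in nature, and both implications rest on one elementary remark: writing $\phi_t$ for the (locally defined) flow of the hamiltonian field $X_H$, for every analytic function $f$ on $M$ and every point $x$ one has $\frac{d}{dt}f(\phi_t(x)) = (L_{X_H}f)(\phi_t(x)) = \pm\{H,f\}(\phi_t(x))$ on the interval of definition of the orbit, the sign being immaterial for what follows. I would treat the two implications separately, the radicality hypothesis intervening in only one of them.

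For (ii) $\Rightarrow$ (i): let $f \in I$ and $x \in V(I)$. Since $V(I)$ is invariant under the flow, $\phi_t(x) \in V(I)$ for every $t$ in the orbit's interval of definition, whence $f(\phi_t(x)) = 0$ identically, because $f$ vanishes on $V(I)$. Differentiating at $t = 0$ yields $\{H,f\}(x) = 0$. Thus $\{H,f\}$ vanishes on all of $V(I)$, and radicality of $I$ gives $\{H,f\} \in I$. As $f$ was an arbitrary element of $I$, this proves $\{H,I\} \subset I$.

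For (i) $\Rightarrow$ (ii), no radicality is needed. From $\{H,I\} \subset I$ we obtain in particular $\{H,f_j\} \in I = (f_1,\dots,f_k)$, so we may write $\{H,f_j\} = \sum_{l=1}^k g_{jl}\,f_l$ with analytic coefficients $g_{jl}$. Fix $x \in V(I)$, and along the orbit through $x$ set $u_j(t) := f_j(\phi_t(x))$ and $a_{jl}(t) := \pm g_{jl}(\phi_t(x))$. The remark above turns the situation into the linear homogeneous differential system $\dot u_j(t) = \sum_{l} a_{jl}(t)\,u_l(t)$ with initial data $u_j(0) = f_j(x) = 0$. By uniqueness of the solution of a linear system with continuous coefficients, $u \equiv 0$; hence $f_j(\phi_t(x)) = 0$ for all $j$ and all $t$, i.e.\ $\phi_t(x) \in V(I)$, so that $V(I)$ is invariant.

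There is no serious obstacle; two points deserve attention. First, in the implication (i) $\Rightarrow$ (ii) one must pass from ``the $\{H,f_j\}$ lie in $I$'' to the vanishing of the $u_j$ by invoking the uniqueness theorem for linear systems, rather than by a pointwise estimate — and it is precisely for this that the hypothesis must be the ideal inclusion $\{H,I\}\subset I$ (equivalently, $\{H,f_j\}\in I$ for every $j$), not some weaker vanishing condition. Second, the radicality of $I$ is used, and is indispensable, only in the converse direction, to promote ``vanishes on $V(I)$'' to ``belongs to $I$''.
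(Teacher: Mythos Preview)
Your proof is correct and follows essentially the same lines as the paper's: the direction (ii) $\Rightarrow$ (i) is identical (differentiate along the orbit at $t=0$, then invoke radicality), and for (i) $\Rightarrow$ (ii) the paper writes $f_i\circ\phi_t = e^{t\{H,-\}}f_i \in I$ using stability of $I$ under $\{H,-\}$, which is just the exponential-series form of the linear system you set up and solve by uniqueness. If anything, your ODE argument is the more careful of the two, since it sidesteps the implicit convergence-and-closure issues in the exponential formula.
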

\begin{proof}
On note $f_1,\dots,f_k$ des générateurs de $I$.\\
$ i) \implies ii)$.\\
Notons $\p_t$ le flot de $H$ au temps $t$.
Soit $x$ un point de $V(I)$, on a~:
$$f_i \circ \p_t=e^{t\{ H,-\}}f_i \in I $$
donc si $f_1(x)=\dots=f_n(x)=0 $ alors
$$f_i \circ \p_t(x)=0. $$
$ ii) \implies i)$.\\
Pour tout $x$:
$$ f_1(x)=\dots=f_k(x)=0 \implies f_1(\p_t(x))=\dots=f_k(\p_t(x))=0 $$
et par conséquent, pour tout $i$~:
$$\frac{d}{dt}_{\mid t=0} f_i(\p_t(x))=\{ H,f_i\}(x)=0. $$
Par conséquent les fonctions $\{ H,f_i \}$ s'annulent sur $V(I)$. Comme $I$ est radical cela entraîne que les $\{H,f_i \}$ sont dans $I$.
\end{proof}
Pour simplifier, commençons par considérer le cas polynômial 
$$M=\RM^{2n}$$ muni de coordonnées $q_1,\dots,q_n,p_1,\dots,p_n$ et notons
$$\RM[ q,p ]:=\RM[q_1,\dots,q_n,p_1,\dots,p_n] .$$ 
La condition $\{ H,I \} \subset I$ entraîne que si $I$ est $H$-invariant alors la dérivation
$$\RM[q,p ]/I \to \RM[q,p]/I,\ f \mapsto \{ H,f \}  $$
est bien définie. Comme $\RM[q,p ]/I$ est l'anneau des fonctions sur $V(I)$, cette dérivation est associée à la restriction du champ hamiltonien de $H$ à $V(I)$.

  La remarque fondamentale que nous appliquerons à de nombreuses reprises est qu'{\em en restriction à $V(I)$ le flot hamiltonien reste inchangé si on lui ajoute   un élément de $I^2$.} En effet, posons
  $$H'=H+\sum_{ij}a_{ij}f_if_j $$
 Pour tout polynôme $g$ on a~:
  $$\{ H+\sum_{ij}a_{ij}f_if_j ,g \} =\{ H ,g \}+  \sum_{ij}a_{ij}f_i \{f_j ,g \} +\sum_{ij}a_{ij}f_i\{ H+f_j ,g \}  .$$
  Les polynômes $H$ et $H'$ définissent donc la même dérivation de $\RM[q,p ]/I$.
   
\noindent  {\em Exemple 1.} Considérons les hamiltoniens 
$$H:\RM^2 \to \RM,\ (q,p) \mapsto p$$ 
et $H'=p+qp^2$.   L'idéal $I$ engendré par $p$ est $H$-invariant. Comme 
$$H'=H\ \mod I^2,$$
il est également $H'$-invariant. Ici, la variété
$$V(I)=\{ (q,p) \in \RM^2: p=0 \} $$
est une droite paramétrée par $q$. Les champs hamiltonien de $H$ et $H'$ sont respectivement
\begin{align*}
X_H&=\d_q \\
X_{H'}&=(1+2pq)\d_q-p^2\d_p
\end{align*}
Dans le plan, ce sont des champs différents, mais ils sont tous deux égaux à $\d_q$  en restriction à la droite $V(I)$. 
 
 \noindent  {\em Exemple 2.} Le fibré cotangent au cercle $T^*S^1$ est muni d'une structure symplectique canonique que l'on peut identifier à la forme
 $dq \w dp $ sur $\RM/2\pi \ZM \times \RM$ avec $q \in \RM/2\pi \ZM$, $p \in \RM$. Soit $I$ l'idéal engendré par $p$. Le flot du Hamiltonien  
$$H:\RM/2\pi \ZM \times \RM \to \RM,\ (q,p) \mapsto p$$
décrit des cercles de hauteur constante. Le champ hamiltonien de  
$$H'=p+p^2\sin q $$
est égal à $H$ en restriction à $V(I)$. En particulier,  $V(I)$ est une variété invariante de $H'$.
  \begin{figure}[!htb]
  \centering
  \includegraphics[width=7cm]{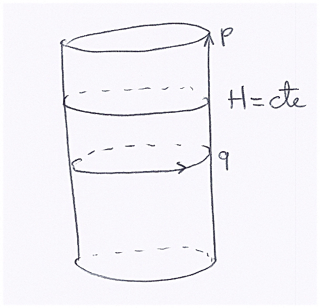}
\end{figure}

  \section{Mouvements quasi-p\'eriodiques}
  Voyons maintenant comment se formule algébriquement la condition pour un système hamiltonien de posséder une tore invariant.
 Le tore de dimension $n$
   $$\mathbb{T}^n=\underbrace{S^1 \times \cdots \times S^1}_{n\ fois}$$
   possède un fibré cotangent trivialisable. En effet, si l'on note $q_i \in \RM/2\pi \ZM$ la \og coordonnée \fg sur le i\up{e} cercle et $dq_i$ la forme différentielle associée. Tout champ de vecteur s'écrit sous la forme 
   $$\sum_{i=1}^n a_i(q) d{q_i}. $$
Le  fibré cotangent est  donc isomorphe à $\mathbb{T}^n\times \RM^n$. Par la suite, nous identifierons la section nulle du fibré avec le tore $\mathbb{T}^n$.
    
Tout fibré cotangent est muni d'une structure symplectique canonique standard et on peut trivialiser le fibré cotangent au tore à l'aide des \og coordonnées action-angle \fg~$\p_1,\dots,\p_n,p_1,\dots,p_n$ avec $\p_i \in \RM/2\pi \ZM$ et $p_i \in \RM$, de telle sorte que la forme symplectique s'écrive
   $$\omega=\sum_{i=1}^n d\p_i \w dp_i .$$
  
Les angles n'étant définis qu'à $2\pi$ près ce ne sont pas des  coordonnées au sens strict, mais on peut poser~:
  $$q_i=e^{\sqrt{-1}\p_i} .$$
 On a alors~:
  $$\sqrt{-1} \frac{dq_i}{q_i}= d\p_i$$
et par conséquent~:
  $$\omega=\sqrt{-1}\sum_{i=1}^n  \frac{dq_i}{q_i} \w dp_i .$$
  Considérons, à titre d'exempl, l'anneau $A=\CM[q,q^{-1},p]$. Cest à dire l'anneau des ponymômes trigonométriques dans les variables $\p_i,p_i$.
  
 La section nulle du fibré cotangent est un tore d'équations
 $$f_i:T^*\mathbb{T}^n \to \RM, (q,p) \mapsto p_i,\ i=1,\dots,n.$$
 Notons $I \subset A$ l'idéal engendré par les $f_i$. On vérifie facilement que c'est un idéal radical.
Il revient au même de dire que la section nulle est un tore invariant que
$$\{ H,I\} \subset I $$
ou encore que $H$ est de la forme
$$H=\a_0+\sum_{i=1}^n \a_i p_i\ \mod I^2. $$ 

C'est la formulation utilisée par Kolmogorov dans son article de 1954, que nous avons donc conceptualisée.

Le vecteur $\a=(\a_1,\dots,\a_n)$ s'appelle {\em vecteur des fréquences}. Sur $V(I)$, les équations de Hamilton se réduisent à
 $$\left\{ \begin{matrix}\dot q_i&=& \a_i\\
  \dot p_i&=& 0\end{matrix} \right. $$ 
  
On peut les intégrer facilement on a 
$$\left\{ \begin{matrix}q_i&=&q_i(0)+ \a_it\ \mod 2\pi\\
  p_i&=& 0\end{matrix} \right. $$
Un tel mouvement s'appelle {\em quasi-périodique}. Si les $\a_i$ sont rationnels alors les trajectoires sont périodiques. En revanche, 
si les $\a_i$ sont linéairement indépendants la trajectoire est dense dans le tore.  

  \begin{figure}[!htb]
  \centering
  \includegraphics[width=7cm]{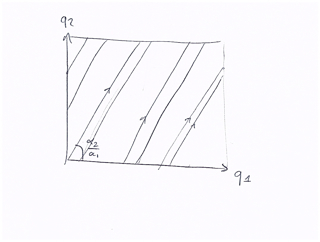}
\end{figure}

\section{Conditions arithm\'etiques}
Le théorème KAM  garantit qu'un mouvement quasi-périodique persiste sous l'effet de perturbations, si sa fréquence est mal approchée par les trajectoires rationelles. Précisons cette notion~: {\em un vecteur $a=(a_1,\dots,a_n)$ est dit diophantien s'il existe
des constantes $(C,\nu)$ telles que~:}
$$\forall j \in \ZM^n,\ |(j,a)|\geq \frac{C}{\| j \|^{n+\nu}} .$$
Un théorème classique de Dirichlet dit que l'on peut toujours obtenir des approximations jusqu'au degré $n$~:
$$\forall a \in \RM^n, \forall j \in \ZM^n,\exists C>0,\  |(j,a)|\leq \frac{C}{\| j \|^{n}} .$$
La condition diophantienne dit qu'on ne peut pas faire mieux que le théorème de Dirichlet.

La question se pose de savoir si de tels vecteurs existent et s'ils sont nombreux. Un nombre trop bien
approché par des rationnels comme
$$l=\sum_{n \geq 0}10^{-n!} $$
définit un vecteur $\a=(1,l)$ qui
ne vérifie pas de condition diophantienne. En effet, les nombres rationnels
$$ l_N=\sum_{n = 0}^N10^{-n!} $$ 
vérifient 
$$| l-l_N| \leq 2\cdot 10^{-(N+1)!}. $$
Posons: 
$$\b_N=(\sum_{n = 0}^N10^{(N-n)!},10^{N!}) \in \ZM^2.$$
On a
$$(\a,\b_N) \leq 2 \cdot (10^{-N!})^N$$
et 
$$\| \b_N \| \geq 10^{N!} $$
donc
$$(\a,\b_N) \leq 2\| \b\|^{-N}. $$
Il existe donc des vecteurs non diophantiens. Cependant,  un résultat dû à Liouville montre que pour tout les nombres algébriques non rationnels $\a \in \overline{\QM} \setminus \QM$, les vecteurs non nul sur la droite $(1,\a)$ sont diophantiens. Mais comme $\overline{\QM}$ est dénombrable, l'ensemble de tous ces vecteurs forment un ensemble de mesure nulle. Cependant~:
\begin{proposition} Pour tout $\nu>0$ fixé, l'ensemble
$$\Omega_\nu =\{ a \in \RM^n: \exists C, \forall j \in \ZM^n,\ |(j,a)|\geq \frac{C}{\| j \|^{n+\nu}} \}  $$
est de mesure pleine.
\end{proposition}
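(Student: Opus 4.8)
\quad L'idée est de montrer que le complémentaire de $\Omega_\nu$ est de mesure nulle. On se ramène d'abord à une boule bornée $B_R = \{ a \in \RM^n : \| a \| \leq R \}$, car chaque « bande » $\{ a : |(j,a)| < \varepsilon \}$ est de mesure infinie dans $\RM^n$ tout entier. Je réécrirais ensuite la condition sous forme d'une intersection–réunion dénombrable : dire que $a \notin \Omega_\nu$, c'est dire que pour tout $C > 0$ il existe $j \in \ZM^n \setminus \{ 0 \}$ avec $|(j,a)| < C \| j \|^{-(n+\nu)}$, si bien qu'en ne conservant que les valeurs $C = 1/m$,
$$\RM^n \setminus \Omega_\nu \;=\; \bigcap_{m \geq 1}\ \bigcup_{j \neq 0} E_{j,1/m}, \qquad E_{j,C} := \{ a \in \RM^n : |(j,a)| < C \| j \|^{-(n+\nu)} \} .$$
Il suffit donc de majorer $\mathrm{mes}\big( \bigcup_{j \neq 0} E_{j,C} \cap B_R \big)$ pour $R$ fixé, puis de faire tendre $C$ vers $0$.

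Vient ensuite l'estimée géométrique élémentaire qui est le cœur de l'affaire : $E_{j,C}$ est la bande délimitée par les deux hyperplans parallèles $(j,a) = \pm\, C\| j \|^{-(n+\nu)}$, donc de largeur de l'ordre de $C\| j \|^{-(n+\nu+1)}$ (les normes de $\RM^n$ étant équivalentes), et le théorème de Fubini donne alors
$$\mathrm{mes}(E_{j,C} \cap B_R) \;\leq\; c_n R^{n-1}\, C\, \| j \|^{-(n+\nu+1)},$$
où $c_n$ ne dépend que de $n$. Par sous-additivité dénombrable,
$$\mathrm{mes}\Big( \bigcup_{j \neq 0} E_{j,C} \cap B_R \Big) \;\leq\; c_n R^{n-1}\, C \sum_{j \in \ZM^n \setminus \{ 0 \}} \frac{1}{\| j \|^{n+\nu+1}} .$$

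Le point où intervient l'hypothèse $\nu > 0$ — et, au fond, le seul pas un peu délicat — est la convergence de cette série : en regroupant les $j$ selon la valeur entière $k$ de $\| j \|_\infty$, dont il y a $O(k^{n-1})$, elle se majore par $\sum_{k \geq 1} k^{n-1}\, k^{-(n+\nu+1)} = \sum_{k \geq 1} k^{-(\nu+2)} < \infty$. En appelant $S_\nu$ sa somme, on obtient $\mathrm{mes}\big( \bigcup_{j \neq 0} E_{j,C} \cap B_R \big) \leq c_n R^{n-1} S_\nu\, C$, quantité qui tend vers $0$ avec $C$.

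Il ne reste qu'à conclure : l'intersection sur $m$ des ensembles précédents, prise avec $C = 1/m$, est donc de mesure nulle dans $B_R$, et ce pour tout $R > 0$ ; par conséquent $\RM^n \setminus \Omega_\nu$ est de mesure nulle et $\Omega_\nu$ de mesure pleine. On remarquera au passage que $\Omega_\nu = \bigcup_{C \in \QM_{>0}} \bigcap_{j \neq 0} \{ a : |(j,a)| \geq C \| j \|^{-(n+\nu)} \}$ est une réunion dénombrable de fermés, donc un ensemble borélien, ce qui garantit que les énoncés de mesure ci-dessus ont bien un sens. L'argument ne présente aucune difficulté conceptuelle : toute la subtilité tient dans la localisation à une boule (sans quoi les mesures en jeu sont infinies) et dans le comptage des points entiers des couches sphériques, qui fixe l'exposant critique $n+\nu$.
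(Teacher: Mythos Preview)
Your argument is correct and follows essentially the same route as the paper's: localise to a bounded region (the paper uses a cube $[-N,N]^n$, you a ball $B_R$), cover the complement of $\Omega_{\nu,C}$ by the strips $E_{j,C}$, bound the measure of each strip intersected with the region, sum over $j$, and let $C\to 0$. Your version is in fact slightly sharper --- you retain the extra factor $\|j\|^{-1}$ coming from the perpendicular strip width, so your series $\sum_{j\neq 0}\|j\|^{-(n+\nu+1)}$ converges already for $\nu>-1$, whereas the paper's coarser estimate leads to $\sum_{j\neq 0}\|j\|^{-(n+\nu)}$, which is where the hypothesis $\nu>0$ is genuinely needed.
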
 
\begin{proof}

  \begin{figure}[!htb]
  \centering
  \includegraphics[width=7cm]{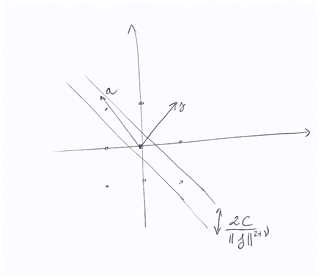}
\end{figure}

Faisons la démonstration pour $n=2$, le cas général est identique. 
 Commençons par fixer la constante $C$ et considérons les ensembles
$$\Omega_{\nu,C}=\{ a \in \RM^2:  \forall j \in \ZM^2,\ |(j,a)|\geq \frac{C}{\| j \|^{2+\nu}} \} . $$
De telle sorte que
$$\Omega_\nu =\bigcup_{C>0}  \Omega_{\nu,C}.$$
Le complémentaire de chacun des ensembles $\Omega_{\nu,C}$ est réunion sur les $j \in \ZM^2$ des ensembles
$$B_C(j) =\{ a \in \RM^2:  |(j,a)|< \frac{C}{\| j \|^{2+\nu}} \}.$$
Or l'ensemble $B_C(j)$ est une bande dont la largeur est 
$$ \frac{2C}{\| j \|^{2+\nu}}.$$
Son intersection  avec le carré $\Ct_N=[-N,N]^2$ a donc une aire majorée par
$$ \frac{4CN}{\| j \|^{2+\nu}}.$$
La suite, à deux indices, de terme général $\| j \|^{-2-\nu},\ j \in \ZM^2$ est sommable pourvu que $\nu>0$ , nous notons $K_\nu$ sa somme.
Le complémentaire de $\Omega_{C,\nu} \cap \Ct_N$ est la réunion des $B_C(j) \cap \Ct_N$~:
$$\Ct_N \setminus \left( \Omega_{C,\nu} \cap \Ct_N \right)=\bigcup_{j \in \ZM^n} B_C(j) \cap \Ct_N $$
où $Vol$ désigne la mesure de Lebesgue.
Ce dernier possède une aire  majorée par  la somme des aires des $ B_C(j) \cap \Ct_N$ et donc par~:
$$Vol(\bigcup_{j \in \ZM^n} B_C(j) \cap \Ct_N) \leq \sum_{j \in \ZM^2 \setminus \{ 0\}} \left( \frac{4CN}{\| j \|^{2+\nu}} \right) =4K_\nu CN.$$
Nous obtenons ainsi l'inégalité~:
$$Vol(\Ct_N \setminus \Omega_\nu)=Vol(\bigcap_{C >0} \Ct_N \setminus \left( \Omega_{C,\nu} \cap \Ct_N \right)) \leq \lim_{C \mapsto 0}4K_\nu CN=0  .$$
Ceci montre que la réunion des $\Omega_{C,\nu}$ est de mesure pleine et achève la démonstration.
\end{proof}
 Cette proposition montre que bien qu'il soit difficile de donner explicitement un nombre diophantien non algébrique, ceux-ci sont en fait très nombreux.
 
  \section{Le th\'eor\`eme des tores invariants}
Nous pouvons maintenant énoncer le théorème de Kolmogorov. Pour cela, nous allons utiliser la notion de germe.
Soit $M$ un espace topologique et $K \subset M$ un sous-ensemble. Deux
 sous-ensembles $V,V'$ de $M$ ont le même germe en $K$ s'il existe un voisinage $U$ de $K$ tel que 
 $$V \cap U=V' \cap U.$$
 On définit ainsi le germe de l'ensemble $V$ en $K$ noté $(V,K)$.

 Soit maintenant $U,V$ des voisinages de $K$ dans $M$ et $X$ un ensemble.  Deux applications
 $$f:U \to X,\ g:V \to X$$
 définissent le même germe en $K$ si elles sont égales sur $U \cap V$. La classe d'équivalence définie par une fonction s'appelle le germe de l'application en $K$. Dans le cas analytique, le passage au germe consiste simplement à \og oublier \fg l'ensemble sur lequel la fonction est définie. Ainsi les applications
 $$\CM \to \CM, z \mapsto z^2 $$
 et 
 $$D(0,1) \to \CM,\ z \mapsto z^2,\ D(0,1):=\{ z \in \CM: | z | <1 \} $$
 définissent le même germe en tout point du disque et même sur tout sous-ensemble compact du disque. La  notion de germe permet de raccourcir les énoncés car on s'affranchit de préciser les voisinages sur lesquels sont définis les
objets.
  
  L'anneau des germes de fonctions analytiques réelles sur $M$ le long de $K$ est noté $\Rt_{M,K}$.  
 On note  $\CM\{t\}$ l'espace vectoriel des germes en l'origine dans $\CM$, c'est-à-dire l'espace vectoriel des séries convergentes en une variable $t$.

   \begin{theoreme} Munissons $M= \RM \times T^*\mathbb{T}^n=\{ t,q,p \}$ de la structure de Poisson induite par celle de $T^*\mathbb{T}^n$ et posons $K=\{ 0 \} \times \mathbb{T}^n $. Soit $I$ l'idéal engendré par les $p_i$
et   $H(t,q,p) \in \Rt_{M,K}$  tel que
$$H(t,q,p)=\sum_{i=1}^n \a_i p_i \ \mod I^2 .$$
Supposons que
   \begin{enumerate}[{\rm i)}]
   \item le vecteur $\a=(\a_1,\dots,\a_n)$ soit diophantien ;
   \item la matrice hessienne $\d_{p_ip_j} H(0)$ soit non-degénérée
   \end{enumerate}
   alors il existe un automorphisme de Poisson $\p \in \Aut(\Rt_{M,K})$ 
   tel que
$$\p(H)  =H\ \mod (I^2 \oplus \CM\{ t \}) .$$
    En particulier, $H$ possède un idéal invariant isomorphe à $I$ et, par conséquent tout représentant de $H$  admet une famille de tores invariants paramétrée par $t$, pour $t$ assez petit.
   \end{theoreme}
   
  Notre but va être de donner une démonstration conceptuelle de ce théorème qui servira de base à une théorie générale des déformations et des formes normales.
      
  Considérons le cas $n=1$. Posons
  $$H(t,q,p)=\a p +a(q,p,t) p^2+tb(q,p) . $$
  Le cercle d'équation $p=t=0$ est invariant et le flot hamiltonien est de période $2\a\pi$. Comme l'énergie est conservée les cercles
  $$V_{t,\e}:=\{ (q,p) \in T^* S^1:H(t,q,p)=\e \} $$
  sont des tores invariants de dimension 1. Le théorème dit que pour tout $t$, on peut trouver $\e$ pour que le flot de $H_t$ sur 
  $V_{t,\e}$ soit conjugué à celui de $H_0$ sur $V_0$. La condition de non dégénérescence est nécessaire pour obtenir la conclusion du théorème. En effet,
la famille 
  $$H(t,q,p)=\a p +tp  $$
donne un exemple pour les $H(t,-)$ ont tous des flots non conjugués, car de fréquence différente. Cependant, cette famille possède
des tores invariants pour tout $t$ bien que le théorème ne s'applique pas. L'affaiblissement de la condition de non-dégénérescence possède une longue histoire (voir bibliographie).
   
A partir de $n=2$, le théorème devient non trivial.  Il affirme, par exemple, que tout hamiltonien de la forme
$$H(t,q,p)=p_1+\a p_2+p_1^2+p_2^2+tR(t,q,p) $$  
possède des tores invariants pour tout $t$ suffisamment petit, dès que $\a \in \overline{\QM} \setminus \QM$.  
    
 \chapter{Le probl\`eme fondamental de la m\'ecanique} 
 \section{Formulation abstraite du th\'eor\`eme de Kolmogorov} 
 Considérons à nouveau, l'algèbre $\Rt_{M,K}$ des fonctions analytiques au voisinage du tore dans les variables $q_i,p_i,t$. Notons $I$ l'idéal engendré par les $p_i$, fixons un vecteur $\a \in \RM^n$ diophantien et posons, comme précédemment
 $$H_0=\sum_{i=1}^n \a_i p_i+\sum_{i=1}^n \b_{ij} p_ip_j$$
 avec $\b=(\b_{ij}) \in M(n,\RM)$.  La section nulle est un tore invariant de fréquence $\a=(\a_1,\dots,\a_n)$ pour $H(t=0,-)$, ce qui se traduit algébriquement par
 $$\{ H,I \} \subset I. $$ 
 
 Considérons le sous-espace affine $H+M \subset E $ avec
 $$M=t \Rt_{M,K}. $$
Les éléments de $H \in H_0+M$ sont les déformations de $H_0$.

Posons maintenant $F=\RM\{ t \}+I^2 $. Pour tout élément du sous-espace affine $H_0+tF$ admet $I$ pour idéal invariant. 

 Le groupe des automorphisme de Poisson agit sur l'algèbre de Poisson $\Rt_{M,K}$, on note $G$ le stabilisateur de $H+M$.
 Le théorème de Kolmogorov alors s'énonce sous la forme suivante~:{ \em  Si  la matrice $(\b_{ij})$ est non-degénérée
 et si le vecteur $\a$ est diophantien alors
  tout élément  de $ H +M $ est dans l'orbite d'un élement de $H+F$, autrement dit l'application
  $$G \times F \to H+M,\ (\p,\a) \mapsto \p(H+\a) $$
  est surjective. }
   
   Ce type de problématique peut-être considérer de manière générale lorsqu'un groupe agit sur un espace topologique.
   On dit alors $F$ est une transversale pour l'action au point considéré. Lorsque l'on peut prendre $F=\{ 0 \}$ on dit que l'espace est localement  $G$-homogène.   On peut schématiser la situation par la figure suivante~:
   \begin{figure}[!htb]
  \centering
  \includegraphics[width=5cm,height=4cm]{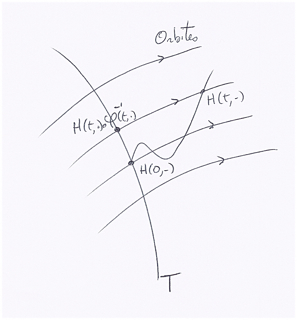}
\end{figure}

 \section{Actions de groupes de Lie}
 Lorsqu'un groupe de Lie $G$ agit sur une variété $V$, en tout point $x \in V$, il définit une action infinitésimale de l'algèbre de Lie $\alg$ sur l'espace tangent en $x$.
 En effet, en différentiant l'action
 $$\rho:G \to M,\ g \mapsto g \cdot x. $$
  en l'identité, on trouve une application
  $$\rho_*:\alg \to T_x M,\ \xi \mapsto D\rho(\Id)\xi $$
  Lorsque $M$ est un ouvert d'un espace vectoriel, on identifie $T_xM$ avec l'espace vectoriel.  L'orbite du point $x$ sous l'action du groupe $G$ est alors tangente à celle de l'algèbre de Lie.
  
  Commençons par l'exemple de  $G=GL(n,\CM)$ agissant sur lui-même  par conjugaison
  $$P \cdot A:=PAP^{-1} .$$ 
  Son algèbre de Lie est l'espace des matrices muni du crochet
  $$[A,B]=AB-BA. $$
   Posons 
  $$P=e^{t B}=\Id+tB+o(t),$$ on a alors
  $$PAP^{-1}=(\Id+tB)A(\Id-tB)+o(t)=A+t[B,A]+o(t).$$
  L'action de l'algèbre de Lie est alors
  $$B \mapsto [B,A]. $$
 Supposons que la matrice $A$ soit diagonale avec des valeurs propres distinctes. Toute matrice $B$ dans un voisinage suffisament petit de $A$ a ses valeurs propres distinctes, elle est donc diagonalisable~:
  $$\exists P \in GL(E); D=PBP^{-1} .$$
Autrement dit l'espace vectoriel des matrices diagonales $F$ est une transversale de l'action au voisinage de la matrice $A$.
  
  \begin{figure}[!htb]
  \centering
  \includegraphics[width=7cm]{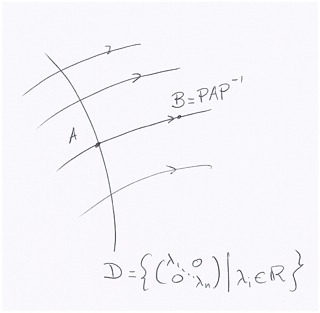}
\end{figure}

  Reprenons l'exemple de l'action adjointe.  
  L'action adjointe possède une linéarisation intermédiaire. En effet, comme l'action fixe l'identité
  $$P \cdot \Id=\Id ,$$
  la différentielle de l'action
  $$G \to G,\ M \to P \cdot M $$
  en l'identité envoie l'algèbre $\alg$  sur elle-même. On obtient ainsi un représentation linéaire du groupe $G$ appelée {\em représentation adjointe}~:
 $$G \to gl(\alg) .$$
 Lorsque $G$ est un groupe de matrices, la conjugaison
 $$M \mapsto PMP^{-1} $$
 est linéaire donc égale à sa dérivée.
 
 Prenons à présent $G=SO(3,\RM)$ le groupe des rotation du plan
 $$G=\{ A \in M(3,\RM): ^t A A=\Id. \} $$
 En subsituant $A=\Id+tX+o(t)$ avec $X \in \alg$, dans cette égalité on trouve~:
 $$\Id+t(^t X+X)+o(t)=\Id. $$
 Ce qui nous dit que l'algèbre de Lie de $G$ est formé des matrices anti-symétriques. C'est un espace vectoriel de dimension $3$ et l'isomorphisme
 $$\p:\RM^3 \to \alg (x,y,z) \mapsto  \begin{pmatrix} 0 & x & y \\ -x & 0 & z \\ -y &-z & 0 \end{pmatrix}$$
 envoie le produit vectoriel sur le crochet. On vérifie facilement que, via cet isomorphisme, la conjugaison de $\p(\Omega)$ par une rotation $P$ correspond à prendre l'image de $\Omega$ par $P$~:
 $$P\p(\Omega)P^{-1}=\p(P\Omega). $$
 La représentation adjointe de $SO(3)$ s'identifie donc avec sa représentation naturelle dans $\RM^3$. Les orbites, en dehors de celle de l'origine, sont des sphères. Toute droite issue de l'origine définie une transversale à chacune de ces orbites au point d'intersection.

  \section{L'algorithme de Kolmogorov}
 Nous allons à présent résoudre le problème de Kolmogorov abstrait en dimension finie. Notre travail sera ensuite de  développer des techniques pour que la démonstration puisse se transposer au cas de la dimension infinie. 
 
 Il s'agit de montrer que localement, une transversale à l'action de l'algèbre de Lie donne une transversale à l'action du groupe. Nous commençons par le cas homogène.
\begin{theoreme} Soit $G$ un groupe de Lie agissant sur une variété $M$ et $a \in M$. Si l'action infinitésimale 
$$ \alg  \to  T_p M,\ \xi \mapsto \xi \cdot \a $$
est surjective alors $M$ est localement $G$-homogène au voisinage de $x$.
\end{theoreme}
C'est une conséquence directe du théorème des fonctions inverses. En effet, sous les hypothèses du théorème la différentielle en l'identité
de
$$G \to M,\ g \mapsto g \cdot a $$
est surjective, donc cette application est surjective sur un voisinage de l'identité.

L'algorithme de Kolmogorov est une variante de la méthode de Newton qui permet de construire pour chaque $x $ dans un voisinage assez petit de $a$ une suite $g_n$ (dépendant de $x$) qui converge rapidement vers $g$ tel que
$$g\cdot a=x. $$

Pour simplifier, nous nous restreignons aux cas de l'action naturelle d'un groupe de matrice $G \subset GL(V)$ sur un espace vectoriel $V$.

On note
$$j:V  \to \alg$$
un inverse à droite de 
$$\rho:\alg \to V,\ \xi \mapsto \xi(v)$$

On fixe $x_0 \in V$. Par hypothèse, on peut écrire $x_0 \in V$ sous la forme
$$x_0=\xi_0 (a). $$
On pose 
\begin{align*}
x_1=&\, e^{-\xi_1}x_0=x_0-\xi_1(a)+o(\| \xi_1\|)=o(\| \xi_1\|) \\
\xi_1=&\,j(x_1)
\end{align*}
 
 On définit ainsi, de proche en proche, des suites $(x_n),(\xi_n)$ telles que~:
$$\left\{ \begin{matrix} \xi_n&=&j(x_n) \\ 
x_{n+1}&=&e^{-\xi_n}( x_n) \end{matrix} \right.$$
On a ainsi
$$x_{n+1}=e^{-\xi_n} (x_n) =e^{-\xi_n}e^{-\xi_{n-1}}x_{n-1} =\dots=\prod_{i \geq 0}^{n}e^{-\xi_i}x_0$$
Le premier problème qui se pose est de donner un critère pour la convergence d'un produit infini d'exponentielle~:
 munissons l'espace des endomorphisme de $\RM^n$ de la norme d'opérateur~:
 $$\| \xi \|:=\sup_{x \in \RM^n} \frac{\| \xi(x) \|}{\| x \|} .$$
 \begin{lemme} Soit $\xi_i:\RM^n \to \RM^n$ des applications linéaires. Le produit infini $\prod_{i \geq 0}e^{\xi_i}$
 est convergent si et seulement si la suite $(\| \xi_i \|)$ est sommable.
 \end{lemme}
 \begin{proof}
 En effet
 $$\log \| \prod_{i \geq 0}e^{\xi_i} \| \leq \log  \prod_{i \geq 0}e^{\| \xi_i \|}=\sum_{i \geq 0}\| \xi_i \|. $$
 \end{proof}
La généralisation de ce lemme à la dimension infinie jouera un rôle fondamental. Lorsque le lemme s'applique la limite de la suite
$$g_n:= \prod_{i \geq 0}^{n}e^{-\xi_i}$$
converge vers un élément $g$ tel que $g \cdot a=x$. 

Tâchons maintenant de ré-écrire la suite $(\xi_n)$ comme itération d'une fonction. Comme
 $$x_n=\xi_n(a).$$
l'expression 
$$x_{n+1}=e^{-\xi_n} (a+x_n),$$ 
on a~:
$$x_{n+1}=e^{-\xi_n} (a+\xi_n( a)).$$
Ce que l'on peut ré-écrire sous la forme
$$x_{n+1}=(e^{-\xi_n}(\Id+\xi_n)-\Id)( a) .$$

La suite $(\xi_n)$ s'obtient en itérant la fonction $F=j \circ f$ où $f$ est la fonction analytique
$$ x \mapsto e^{-x}(1+x)-1$$
 avec un point critique à l'origine. Le théorème du point fixe suivant montre que la convergence de notre algorithme est très rapide~: \begin{theoreme}  Soit
$$F:\RM^n \to \times \RM^n,\ x \mapsto F(x),\ F(0)=0  $$
une fonction $C^2$ avec un point critique à l'origine. Il existe alors un voisinage $U \subset \RM^n$ de l'origine et une constante $\rho<1$
tell que pour tout $x \in U$, la suite $x_n=F^n(x)$ converge et
$$\| x_n \| \leq \rho^{2^n}. $$
\end{theoreme}
\begin{proof}
Posons
$$B(\e)=\{ x \in \RM^n:\| x \| \leq \e \}. $$
La formule de Taylor implique l'existence  d'une boule  $B(R)$  et d'une constante $C \in \RM$ telles que~:
$$\| F(x) \|  \leq C \| x \|^2 $$
pour tout $x \in B(R)$. Soit $r$ suffisamment petit pour que
$$\rho:=C r<1$$

Une récurrence sur $n$ montre que, pour tout $x \in B(r)$, on a~:
$$\| x_n \| \leq \rho^{2^n} .  $$
En effet, la formule de Taylor nous donne
$$\| x_{n+1} \| = \| F(x_n)  \| \leq \rho   C\| x_n \|^2 \leq Cr \| x_n\| \leq  \rho^{2^{n+1}}.$$
Ce qui achève la démonstration théorème.
\end{proof}

 \section{Cas g\'en\'eral}
 Le cas non-homogène n'est qu'une variante à paramètre du cas homogène~:
 \begin{theoreme} Soit $n>0$ et $G \subset GL(n,\RM)$ un groupe agissant sur une variété $M$. Soit $F \subset M$ une sous-variété telle que l'action infinitésimale induise une application surjective
$$T \times \alg  \to T_a M/T_a F,\ (\a,\xi) \mapsto (\a,\overline{\xi \cdot \a}) $$
alors l'application
$$T \times G   \mapsto V,\ (\a,g) \mapsto g \cdot \a $$
est surjective au-dessus de tout voisinage suffisamment petit de a.
\end{theoreme}
C'est à nouveau une application du théorème des fonctions inverse. Comme précédemment nous souhaiterions une construction explicite
de $g$ et $\a$ à l'aide d'un algorithme rapide.

Pour cela, nous avons besoin d'un théorème de point fixe à paramètre~:
 \begin{theoreme} Soit
$$F: \RM^k \times \RM^n \to \RM^k \times \RM^n,\ (a,x) \mapsto (a+f_1(a,x),f_2(a,x)),\ f_i(-,0)=0  $$
une application $C^2$ telle que $f_1(a,-)$ et $f_2(a,)$ possèdent des points critiques en l'origine pour tout $a$.
Pour tout $a$ et pour tout $\rho<1$, il existe un voisinage $B$ de l'origine dans $\RM^n$ tel que pour tout $x \in B$
La suite $(a_n,x_n)=F^n(a,x)$ est convergente et~:
$$\| x_n \| \leq \rho^{2^n}. $$
\end{theoreme}
\begin{proof}
On choisit des boules $B_n(r) \subset \RM^n$, $B_k(R) $, $r \leq R<1$, telles que
$$\sup_{b \in a+B_k(R)}\left( \| f(b,x)\|+\| g(b,x)\| \right) \leq C\| x \|^2 $$
pour tout $x \in B_n(r)$ avec $\rho:=Cr<R$. 
Montrons par récurrence que pour $n \geq 1$, on a~:
$$\| x_n \| \leq \rho^{2^n},\ \| a_n \| \leq \| a_0\|+  R \sum_{i=0}^{n-1} \rho^{2^i}   $$
dès que $\sum_{i \geq 0} \rho^{2^i} \leq 1$

Comme dans le théorème du point fixe précédent, la formule de Taylor nous donne~:
$$\| x_{n+1} \| \leq Cr\rho^{2^n}=\rho^{2^{n+1}}.$$
De même~:
$$ \| f(a_n,x_n)\|  \leq C \| x_n \|^2 \leq Cr \| x_n \| \leq R\rho^{2^n}, $$
et, par conséquent
$$\ \| a_{n+1} \| \leq \| a_n\|+ R\rho^{2^n} \leq \| a_0\|+  R \sum_{i=0}^n \rho^{2^i}. $$
Ceci achève la démonstration théorème.
\end{proof}

Comme précédemment, nous ne considérons que le cas où $M=V$ est un espace vectoriel, $F$ un sous-espace vectoriel et $G$ un sous-groupe de $GL(V)$, qui agit de façon naturelle sur $V$.

On note
$$j:F \times V \to F \times \alg$$
un inverse à droite de 
$$\rho:F \times \alg \to F \times V,\ (\a,\xi) \mapsto (\a,\xi(a+\a)).$$ 

On fixe $x_0 \in V$ et on cherche $g$ tel que
$$g(a)=a+x_0. $$
On commence par écrire $x_0$  sous la forme
$$x_0=\xi_0 ( a_0+x_0)+\a_0,\ \a_0 \in F$$
avec $j(0,x_0)=(\a_0,\xi_0)$ et $a_0=a$.

On pose ensuite
\begin{align*}
a_1=&a_0+\a_0 \\
x_1=&e^{-\xi_0}(a_0+x_0)-a_1. 
\end{align*}

\begin{figure}[!htb]
  \centering
  \includegraphics[width=10cm]{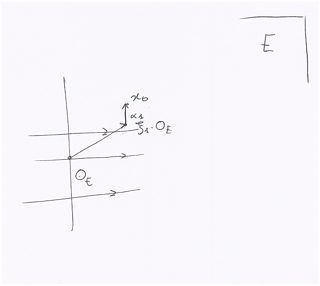}
\end{figure}

On définit ainsi de proche en proche des suites en posant 
\begin{align*}
a_{n+1}=&a_n+\a_n ;\\
x_{n+1}=&e^{-\xi_n}(a_n+x_n)-a_{n+1};\\
(\a_{n+1},\xi_{n+1})=&j(a_{n+1},x_{n+1}). 
\end{align*}
On a alors
$$a_{n+1}+x_{n+1}=e^{-\xi_n}(a_n+x_n)=e^{-\xi_{n}}e^{-\xi_{n-1}}(a_{n-1}+x_{n-1})=\dots=\prod_{i \geq 0}^{n}e^{-\xi_i}x_0$$
 
 Montrons que cette itération est rapide. En remplaçant l'égalité
$$x_n=\xi_{n}( a_n)+ \a_n.$$
dans l'expression 
$$x_{n+1}=e^{-\xi_n} (a_n+x_n )-a_{n+1},$$
on trouve~:
$$x_{n+1}=e^{-\xi_n} (a_n+\xi_n(a_n)+\a_n )-a_{n}-\a_n=\left( e^{-\xi_n} (a_n+\xi_n(a_n))-a_n \right)+\left( e^{-\xi_n}\a_n )-\a_n\right),$$
donc
$$x_{n+1}=(e^{-\xi_n}(\Id+\xi_{n})-\Id) (a_n) +(e^{-\xi_n}-\Id) (\a_n) .$$
L'itération est de la forme
$$(\xi_{n+1},\a_{n+1})=f(a_n,\xi_n,\a_n),\ a_{n+1}=a_n+g(a_n,\xi_n,\a_n)$$
où $f(b,-)$ et $g(b,-)$ possède un point critique en l'origine pour tout $b$.
Un tel procédé itératif converge quadratiquement d'après le théorème du point fixe à paramètre.

 Nous avons démontré la convergence rapide de l'algorithme de Kolmogorov en dimension finie. Cet algorithme repose sur trois ingrédients
 \begin{enumerate}[{\rm i)}]
 \item L'existence d'une exponentielle qui envoie l'algèbre de Lie sur le groupe ;
 \item Un théorème de point fixe.
 \end{enumerate} 
 
 \chapter{Espaces de Kolmogorov}
\section{Systèmes directs d'espaces vectoriels}
Soit $I$ un ensemble et $k$ un corps.

La catégorie des espaces vectoriels gradués sur un corps $k$ est naturellement définie~: ses objets sont les $k$-espaces vectoriels gradués
et ses morphismes sont les applications linéaires
$$u: \bigoplus_{i \in I} E_i \to \bigoplus_{j \in J}F_j.$$
Un morphisme est dit gradué si pour tout $i$, il existe un unique $j$ tel que 
$$u(E_i) \subset F_j.$$

Habituellement, on considère des espaces vectoriels graduées par $\NM$ ou $\ZM$ comme l'espace des polynômes $\RM[X]$ ou celui des polynômes de Laurent $\RM[X,X^{-1}]$. Nous allons plutôt considérer des graduations par des segments semi-ouverts $]0,S]$. On peut par exemple prendre 
$$E=\bigoplus_{s \in ]0,S]} E_s $$ 
où les $E_s$ sont les espaces vectoriels de fonctions continues 
$$E_s=C^0([-s,s],\RM). $$ 
Un élement $f \in E$ est une somme finie formelle
$$f=f_{t_1}\oplus \cdots \oplus f_{t_n},\ t_i \in ]0,S]. $$

Ces espaces possèdent non seulement la propriété d'être gradué, mais aussi d'être des naturellement des systèmes directs. Rappelons cette notion. Lorsque $I$ est un ensemble ordonné, un espace vectoriel gradué sur $I$ 
$$ E=\bigoplus_{i \in I} E_i $$
muni d'applications linéaires
$$f_{ij} :E_i \to E_j,\ \forall i>j$$
telles que
$$f_{ij}f_{jk}=f_{ik},\ i <j <k $$
est appelé un {\em syst\`eme direct d'espaces vectoriels}. Lorsqu'on écrit simplement
$$E_i \to E_j $$
sans préciser le morphisme, il est sous-entendu qu'il s'agit du morphisme $f_{ij}$.
Tout système direct possède une limite directe obtenu en identifiant des vecteurs qui ont la même image dans un certain $E_j$ assez grand.

Les systèmes directs forment une catégorie dont les morphismes gradués sont les applications linéaires
$$u:E \to F $$
qui commute aux morphismes du système~:
$$\xymatrix{E_i \ar[r] \ar[d]^{u_i} & E_j \ar[d]^{u_j} \\
 F_k \ar[r]& F_l }$$
 Les morphismes de la catégorie des  syst\`eme direct d'espaces vectoriels sont les sommes finies de morphismes gradués.

 Chaque système direct défini naturellement des sous-systèmes directs qui ont la même limite~:
 pour tout $t \in I$, les espaces vectoriels
 $$E[t]:=\bigoplus_{i \leq  t} E_i  $$
 définissent également un système direct.

On a un foncteur naturel de la catégorie des systèmes directs dans celles des espaces vectoriels
$$SD \to EV,\ (\bigoplus E_i,f_{ij}) \mapsto \underrightarrow{\lim} E_i $$
Par exemple, la limite directe du système
$$E:=\bigoplus_{s \in ]0,S]} C^0([-s,s],\RM) $$ muni des applications de restriction est l'espace des germes de fonctions continues en $0$.

Si l'on considère l'exemple des polynômes de dégré $i$~:
$$E_i=\RM_i[X],\ i \in \NM.$$  Les inclusions
 $$f_{ij}:\RM_i[X] \to \RM_j[X]$$
 font de la somme directe $E=\bigoplus E_i$ un système direct.  La limite directe de $E$ est l'espace des polynômes.
Un élément de $E$ est une somme formelle de polynômes
$$P_1 \oplus \dots \oplus P_n,\ P_i \in \RM_i[X] .$$
Passer à la limite directe revient à remplacer la somme formelle par la somme habituelle des polynômes. La différence entre l'élément du système direct et sa classe d'équivalence dans sa limite directe est donc assez subtile.
 
On définit naturellement la catégorie des systèmes directs d'espaces vectoriels topologiques en se restreignant aux systèmes dont les morphismes sont continus.
 
Si les $E_i$ sont des espaces vectoriels topologiques, on munit $E$ de la topologie produit~: une base de de voisinage de l'origine est donnée par les produits d'ouverts $\prod_i U_i$ où $U_i$ est un ouvert de $E_i$ avec $U_i=E_i$
sauf pour un nombre fini d'indice. Si $E_i$ est normé, les bornés de cette topologie sont de la forme $\prod_i B_i$ où $B_i$ est borné dans $E_i$.
 Par ailleurs, lorsque $E$ et $F$ sont   des espaces vectoriels topologiques, l'espace $\Lt(E,F)$ est muni d'une topologie que l'on peut décrire ainsi.
On se donne un ouvert $U$ de $E$ et un borné $B$ de $F$. Les ensembles de la forme
$$\Omega(U,F)=\{ v \in L(E,F): v(U) \subset B \} $$
 définissent une base d'ouverts. 
\section{La cat\'egorie des espaces de Kolmogorov}
\label{SS::definition}
Considérons à présent le cas particulier où $I$ est un intervalle $]0,S]$ avec $S>0$. La catégorie des $S$-espaces de Kolmogorov est une sous catégorie pleine des systèmes directs d'espaces vectoriels topologiques.

\begin{definition} Un $S$-espace de Kolmogorov $E$ est un système direct d'espaces de Banach $(E_s,| \cdot |_s)$ paramétré par $]0,S]$ tel que les
les applications linéaires
$$f_{ts}:E_t \to E_s$$ soient continues de norme au plus $1$ pour tout $s<t$.
\end{definition}
Très souvent, on ne précisera pas la valeur $S$.  Les morphismes de la catégorie des espaces de Kolmogorov sont les morphismes continus de systèmes directs. Les espaces vectoriels 
$$E:=\bigoplus_{s \in ]0,S]} C^0([-s,s],\RM), \ S>0 $$ muni des applications de restriction donnent des exemples simples d'espaces de Kolmogorov.

Nous désignerons par $\Lt(E,F)$ l'espace vectoriel des morphismes de $E$ dans $F$ et lorsque $E=F$, nous utiliserons la notation $\Lt(E)$ au lieu de $\Lt(E,E)$. 

On étend la norme de $E_s$ a une application
$$|\cdot |_s:E \to \RM_+ \cup \{+\infty \} $$
de la façon suivante~: pour
$$x=x_1\oplus \cdots \oplus x_n \in {\bigoplus}_{i=1}^n E_{t_i},$$ 
on pose
  $$ | x|_s:=\left\{ \begin{matrix} |f_{t_1s}(x_1)+\dots+f_{t_ns}(x_n)|& \ {\rm si\ t_i \geq s },\ \forall i\\  +\infty &\ {\rm sinon.} \end{matrix} \right. $$
 
 Un espace de Kolmogorov possède de nombreuses filtrations. La plus importante d'entres-elles
  est celle qui généralise la filtration par l'idéal maximal de l'espace des germes~: pour tout espace de Kolmogorov $E$, on définit
   $$E^{(k)}=\{ x \in E: \exists C,\tau,\ | x|_s \leq Cs^k,\ \forall s \leq \tau \}.$$
 On a alors
 $$E:=E^{(0)} \supset E^{(1)}\supset E^{(2)} \supset \cdots. $$
\section{Exemples clefs}
Nous allons maintenant donner un premier exemple important d'espace de Kolmogorov.
 On fixe $S>0$. Soit $D=(D_s)$ la famille des disques $D_s \subset \CM$  de rayon de $s <S$~:
$$ D_s=\{ z \in \CM : | z | \leq s \}.$$ 
 Les espaces vectoriels des fonctions continues sur $D_s$ et holomorphes dans l'intérieur du disque
$$E_s:=C^0(D_s,\CM) \cap \Ot_{\CM}(\mathring{D}_s) $$ 
sont des espaces de Banach pour la norme
$$| g |_s:=\sup_{z \in D_s} |g(z)|. $$
L'application de restriction
$$f_{ts}:E_t \to E_s$$
est de norme $1$. Nous avons ainsi définit un espace de Kolmogorov associé à la famille $D$ que nous noterons $C^{\omega}(D)$, sans en indiquer la dépendance en $S$.

Comme toute fonction holomorphe est automatiquement dérivable, les opérateurs différentiels permettent de définir des morphismes $C^{\omega}(D) $.  Le foncteur limite directe envoie l'espace de Kolmogorov $C^{\omega}(D).$
sur l'espace vectoriel $\CM\{ z \}$ des séries convergentes en une variable.

Par ailleurs, l'anneau $\CM\{ z \}$ est local, c'est-à-dire qu'il ne possède qu'un idéal maximal
 $$\Mt:=\{ f \in \CM\{ z \}: f(0)=0 \}. $$ 
Le foncteur limite directe envoie $E^{(k)}$ sur la puissance $k$-ième de $\Mt$. En effet, si
$$f =\sum_{j \geq k} a_j x^j \in \Mt^k$$
on a
$$| f(z)|_s =| a_j| s^k+o(s^k). $$

 Considérons maintenant les espaces vectoriels des fonctions $L^p$-intégrable sur $D_s$ et holomorphes dans l'intérieur du disque
$$E_s':=L^2(D_s,\CM) \cap \Ot_{\CM}(\mathring{D}_s). $$ 
L'application de restriction
$$f_{ts}:E_t' \to E_s'$$
est de norme $1$. Nous avons ainsi définit un autre espace de Kolmogorov associé à la famille $D$ que nous noterons $L^{p,\omega}(D).$
 Le foncteur limite directe envoie également l'espace de Kolmogorov $L^{p,\omega}(D).$
sur l'espace vectoriel $\CM\{ z \}$ des séries convergentes en une variable. 

  \section{  Morphismes born\'es}
 Nous allons a présent généraliser la notion d'opérateur différentiel. Commençons tout d'abord par étudier l'application qui à une série associe sa dérivée~:
 $$\CM\{ z \} \to \CM\{ z \},\ f \mapsto f' .$$
 Notons comme précédemment $D_t$ le disque centré en l'origine de rayon $z$ de rayon $t$. Je dis que cet opérateur est la limite directe d'une famille d'opérateur sur $C^\omega(D)$ et sur $L^{p,\omega}(D).$ Fixons $\l \in ]0,1[$, toute élément $f \in C^\omega(D)_t$ est holomorphe dans l'intérieur de $D_t$ donc dérivable, on a donc une famille de morphisme
 $$C^\omega(D)_t \to  C^\omega(D)_{\l t}$$
 pour tout $\l \in ]0,1[$ et tout $t \in ]0,S]$.  Soit, à présent, $E$ et $F$ des $S$-espaces de Kolmogorov.
\begin{definition} Une famille de morphismes $ u=(u_\l) \subset \Lt(E,F),\ \l \in ]0,1[$ entre deux espaces de Kolmogorov  est dite complète 
si pour tout $s<t \leq S $ et pour tout $\l \in ]0,1[$, on a un diagramme commutatif
$$\xymatrix{E_t \ar[r] \ar[d]_{u_\l} & E_s \ar[d]^{u_\l} \\
 F_{\l t} \ar[r] & F_{\l s}}$$
\end{definition}
Comme il ne peut pas y a voir d'ambiguïté, nous noterons
$$u:E_t \to F_s $$
la restriction de $u_\l$ à $E_t$ avec $s=\l t$. Nous dirons abusivement que $u$ est un morphisme complet d'espaces de Kolmogorov et nous noterons
$ u \in \Lt(E, F)$ au lieu de $ u \subset \Lt(E, F)$. Le foncteur limite directe associe à toute famille complète un unique morphisme.

Nous avons ainsi écrit la dérivation
$$\CM\{ z \} \to \CM\{ z \},\ f \mapsto f' $$
comme limite directe d'une famille complète
$$C^\omega(D) \to C^\omega(D),\ f \mapsto f'. $$
 Cette famille complète possède une propriété particulière~: elle satisfait l'inégalité de Cauchy. Rappelons ce résultat classique et élémentaire d'analyse complexe.
Considérons une fonction
$$f:D_t \to \CM$$
 holomorphe dans l'intérieur de $D_t$ et continue sur le bord de ce disque.  
En règle générale, la dérivée $f'$ n'est définie qu'à l'intérieur de $D_t$.  On peut donc avoir
 $$\lim_{z \to x} | f(z)|=+\infty $$
 lorsque $x$ est sur le bord du disque $D_t$. Cependant, on peut contrôler la vitesse de croissance de cette dérivée vers l'infini.
 En effet,  pour tout $z \in D_s$,  la formule de Cauchy donne~:
$$f'(z)=\frac{1}{2\sqrt{-1}\pi}\int_{\g_z} \frac{f'(\xi)}{\xi-z} dz, $$
où $\g_z$ le cercle de rayon $t-s$ centré en $z$. Après une intégration par parties, on obtient
$$f'(z)=\frac{1}{2\sqrt{-1}\pi}\int_{\g_z} \frac{f(\xi)}{(\xi-z)^2} dz. $$
Enfin, en paramétrant $\g_z$ par~:
$$\theta \mapsto z+(t-s)e^{2\sqrt{-1}\pi \theta} ,$$
on trouve~:
$$ f'(z)=\frac{1}{2\pi (t-s)}\int_{0}^{2\pi} \frac{f(\theta)}{e^{2\sqrt{-1}\pi \theta}} d\theta.$$
Ce qui nous donne {\em l'inégalité de Cauchy}~:
$$| f'(z)| \leq \frac{1}{t-s}| f |_t . $$
Donc lorsque $z$ s'approche du bord de $D_t$, le membre de gauche peut pas tendre vers l'infini plus vite que $1/(t-|z|)$. Pour un opérateur d'ordre $k$, on trouverait de la même façon une croissance en $1/(t-|z|)^k$. Cette propriété nous permet de généraliser la notion d'opérateur différentiel~:

\begin{definition}
\label{D::borne}
  Une morphisme complet $ u \in \Lt(E, F)$ est dit
    $k$-borné, $k \geq 0 $ s'il existe un réel $C>0$ tel que~:
  $$| u(x) |_t \leq \frac{Ce^2}{(t-s)^k} | x |_{s},\ {\rm pour\ tous\ }\ s<t \leq S, x\in E_t . $$
\end{definition} 
L'espace vectoriel des  morphismes  $k$-bornés
entre $E$ et $F$ sera noté $\Bt^k(E,F)$. La plus petite constante~$C$ vérifiant l'inégalité de la définition \ref{D::borne} définit une norme
$\| \cdot \|$ sur l'espace des morphismes $k$-bornés. La constante $e^2$ a été introduite pour simplifier les calculs qui viendront par la suite. Pour $k=0$, on prendra la norme usuelle c'est à dire
sans cette constante et avec $s=t$.
  
\begin{exemple} Les opérateurs différentiels ne sont pas les seuls opérateurs bornées. Comparons les espaces 
$L^{2,\omega}(D)$ et $C^\omega(D)$, définis précédemment, tous deux associés à l'espace des germes de fonctions en une variable.
Comme toute fonction continue sur le disque est intégrable, l'inclusion~:
$$C^\omega(D) \to L^{2,\omega}(D)  $$
est un morphisme $0$-borné. Ce qui est plus original c'est que ce morphisme est inversible d'inverse borné. En effet, un élément
$f \in L^{2,\omega}(D)_t$ est holomorphe dans tout disque de rayon inférieur à $t$, on a donc des inclusions naturelles
$$I_{ts}:L^{2,\omega}(D)_t \to  C^\omega(D)_s,\ s<t. $$
Vérifions que ces inclusions définissent un morphisme borné. Soit $z \in D_s$ et $\D_z$ un disque centré en $z$ de rayon $t-s$.
On a 
$$\int_{\D_z}| f|^2 \leq \int_{D_t}| f|^2=|f|_t  $$
pour tout $f \in L^{2,\omega}(D)_t$. Mais, par ailleurs, en écrivant la série de Taylor de $f$ au point $z$~:
$$f(z+h)=\sum_{i \geq 0}a_i h^i $$
on trouve
$$\int_{\D_z}| f|^2=A\sum_{j \geq 0} |a_j|^2 \, \frac{| h|^{2j+2}}{j+1}$$
où la constante $A$ est l'aire du disque $\D_z$. On a donc
$$\int_{\D_z}| f|^2 \geq | a_0|^2 (t-s)^2=|f(z)|^2(t-s)^2.  $$
Ce qui nous donne en définitive
$$|I(f)|_s \leq \frac{\sqrt{\pi t^2}}{(t-s)}|f|_t $$
Cette inclusion est donc un morphisme $1$-borné. La comparaison de différentes réalisations d'une même limite directe par des espaces de Kolmogorov équivalents, à opérateur borné près, joue un rôle important en théorie KAM.
\end{exemple}

\section{L'espace de Kolmogorov $\Bt^k(E,F)$.}
\begin{proposition}[J. F\'ejoz]Si $E,F$ sont des $S$-espaces de Kolmogorov alors l'espace vectoriel normé
$(\Bt^k_\tau(E,F),\|\cdot\|)$ est un espace de Banach.  
\end{proposition}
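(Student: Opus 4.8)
The plan is to show that $\Bt^k_\tau(E,F)$ is complete for the norm $\|\cdot\|$ by the standard argument: take a Cauchy sequence, produce a candidate limit pointwise using the completeness of each Banach space $F_s$, check that the candidate is a complete morphism satisfying the $k$-bounded estimate, and verify convergence in norm. First I would fix a Cauchy sequence $(u^{(m)})_{m\geq 0}$ in $\Bt^k_\tau(E,F)$. For each $s<t\leq S$ and each $x\in E_t$, the estimate of Definition \ref{D::borne} applied to the difference $u^{(m)}-u^{(m')}$ gives
$$|u^{(m)}(x)-u^{(m')}(x)|_s \leq \frac{e^2}{(t-s)^k}\,\|u^{(m)}-u^{(m')}\|\,|x|_t,$$
so $(u^{(m)}(x))_m$ is Cauchy in the Banach space $F_s$; denote its limit $u(x)\in F_s$. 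One checks easily that $u(x)$ does not depend on the auxiliary choice of $s$ (two such limits agree after applying the restriction map $f_{ss'}$), so $u$ is a well-defined map $E_t\to F_s$ for every $s=\lambda t$, and linearity passes to the limit.

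Next I would verify that $u$ is a complete morphism, i.e. that the square in the definition of a complete family commutes: this follows by passing to the limit in $m$ in the commuting squares for the $u^{(m)}$, using continuity of the restriction maps $f_{ts}$ (which have norm $\leq 1$). Then I would establish the $k$-bounded bound for $u$. Since a Cauchy sequence is bounded, there is $C_0$ with $\|u^{(m)}\|\leq C_0$ for all $m$; for fixed $x\in E_t$ and $s<t$, letting $m\to\infty$ in
$$|u^{(m)}(x)|_s \leq \frac{C_0 e^2}{(t-s)^k}\,|x|_t$$
and using that the norm on $F_s$ is continuous gives the same inequality for $u(x)$, hence $u\in\Bt^k_\tau(E,F)$ with $\|u\|\leq C_0$.

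Finally, convergence in norm: given $\e>0$ choose $N$ so that $\|u^{(m)}-u^{(m')}\|\leq\e$ for $m,m'\geq N$; then for every $s<t$, every $x\in E_t$, and every $m\geq N$, let $m'\to\infty$ in the displayed difference estimate above to get $|u^{(m)}(x)-u(x)|_s\leq \frac{e^2 \e}{(t-s)^k}|x|_t$, which says exactly $\|u^{(m)}-u\|\leq\e$. Hence $u^{(m)}\to u$ and $\Bt^k_\tau(E,F)$ is Banach. The only genuinely delicate point—and the one I would treat most carefully—is the interchange of limits needed to pass the defining inequality to the limit object: everything rests on the fact that the estimate in Definition \ref{D::borne} is uniform in $x$ and in the pair $s<t$, so that pointwise limits automatically respect the functional-analytic structure; the rest is the routine three-$\e$ bookkeeping. (For $k=0$ the same argument applies verbatim with $s=t$ and the constant $e^2$ removed, as noted after Definition \ref{D::borne}.)
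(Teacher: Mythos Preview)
Your argument is correct and follows essentially the same route as the paper: construct the limit by restricting to each pair $s<t$, use boundedness of the Cauchy sequence to obtain the $k$-bounded estimate for the limit, and conclude. The only cosmetic difference is that the paper packages your pointwise construction by invoking directly that $\Lt(E_t,F_s)$ is a Banach space for the operator norm; you unfold this and in addition spell out the commutation with the restriction maps and the convergence $\|u^{(m)}-u\|\to 0$, steps the paper leaves implicit.
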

\begin{proof}
 Considérons un suite de Cauchy  $(u_n) \subset \Bt^k(E,F)$.
   Soit  $s<t \leq S $, les $(u_n)$ induisent, par
 restriction, des applications linéaires continues 
 $$v_n:E_t \to F_s.$$  
 Par définition de la norme $\| \cdot \|$, la suite $(v_n)$ est de Cauchy dans l'espace de Banach $\Lt(E_t,F_s)$ donc converge vers une application  linéaire. On a ainsi définit une limite $u$ de $(u_n)$ dans $\Lt(E,F)$. 
   
 Montrons à présent que $u$ est dans $ \Bt^k_\tau(E,F)$. L'inégalité
  $$|  \|u_n\| - \| u_m\| | \leq  \| u_n- u_m\| $$
   montre que  la suite $\| u_n\|$ est de Cauchy dans $\RM$ donc majorée par un constante~$C>0$.
   On a alors les inégalités~:
   $$| u(x) |_s \leq | u(x)-u_n(x) |_s+\frac{Ce^2}{(t-s)^k}| x |_t, \ {\rm\ pour\ tout\ } n, $$ 
  pour tout $x \in E_t$. Par conséquent, $ u$ est $k$-borné de norme au plus égale à $C$.  La proposition est démontrée.
\end{proof}  

\begin{corollaire} 
\label{C::Borel}
Lorsque $E,F$ sont des $S$-espaces de Kolmogorov, la somme directe des $S$-espaces vectoriels $\Bt^k(E[s],F[s])$, $s \in ]0,S]$
est munie d'une structure d'espace de Kolmogorov induite par les application de restriction
$$\Bt^k(E[t],F[t]) \to \Bt^k(E[s],F[s]),\ t>s. $$
\end{corollaire}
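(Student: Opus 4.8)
Le plan est de vérifier directement que les données proposées — les espaces de Banach $\Bt^k(E[s],F[s])$, $s \in ]0,S]$, munis des applications de restriction — satisfont les axiomes de la définition d'un $S$-espace de Kolmogorov. D'abord, j'observerais que pour chaque $s$, l'espace vectoriel gradué $E[s] = \bigoplus_{i \leq s} E_i$, muni des morphismes de transition de $E$ indexés par $]0,s]$, est lui-même un $s$-espace de Kolmogorov~: la structure de Banach des composantes et la contractivité des morphismes de transition sont héritées de $E$, et de même pour $F[s]$. En appliquant la proposition précédente (due à J. F\'ejoz) au couple $(E[s],F[s])$ avec $\tau = s$, on obtient que $(\Bt^k(E[s],F[s]),\|\cdot\|)$ est un espace de Banach, ce qui fournit les composantes de Banach exigées par la définition.

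Ensuite, je construirais les morphismes de transition et vérifierais leur contractivité. Fixons $s < t \leq S$. Comme $]0,s] \subset ]0,t]$, l'espace $E[s]$ est littéralement le sous-système direct de $E[t]$ obtenu en oubliant les composantes $E_i$ avec $i > s$, et de même pour $F$. Étant donnée une famille complète $u = (u_\l)_{\l \in ]0,1[} \in \Bt^k(E[t],F[t])$, je restreins chaque $u_\l$ aux composantes indexées par $]0,s]$~: pour $s' \leq s$ et $\l \in ]0,1[$, on a $\l s' \leq s' \leq s$, donc $u_\l$ envoie $E[s]_{s'} = E_{s'}$ dans $F[s]_{\l s'} = F_{\l s'}$, et les carrés commutatifs témoignant de la complétude sur $]0,s]$ forment une sous-famille de ceux sur $]0,t]$. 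La famille restreinte $r_{ts}(u)$ est donc un morphisme complet de $E[s]$ dans $F[s]$. De plus, l'inégalité $|u(x)|_{s'} \leq \frac{Ce^2}{(t'-s')^k}|x|_{t'}$ exigée de $r_{ts}(u)$ ne porte que sur les couples $s' < t' \leq s$, qui forment une sous-famille des inégalités valables pour $u$ avec $t' \leq t$~; la même constante convient donc, de sorte que $r_{ts}(u) \in \Bt^k(E[s],F[s])$ et $\|r_{ts}(u)\| \leq \|u\|$ (pour $k=0$ le même argument vaut avec $s'=t'$). L'application linéaire $r_{ts}$ est ainsi continue, de norme au plus $1$. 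La transitivité $r_{sr} \circ r_{ts} = r_{tr}$ pour $r < s < t$ est immédiate, puisque itérer deux restrictions, d'abord à $]0,s]$ puis à $]0,r]$, revient à la seule restriction à $]0,r]$.

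En réunissant ces points, la famille $(\Bt^k(E[s],F[s]))_{s \in ]0,S]}$ d'espaces de Banach, munie des morphismes de transition $r_{ts}$ contractants et compatibles, est par définition un $S$-espace de Kolmogorov, et c'est cette structure que porte la somme directe $\bigoplus_s \Bt^k(E[s],F[s])$. La démonstration se réduit pour l'essentiel à des vérifications de routine~; le seul point méritant un instant d'attention est la monotonie de la norme par restriction et la cohérence du paramétrage en $\l$, qui résultent toutes deux du seul fait que passer de $E[t]$ à $E[s]$ ne fait que rétrécir l'ensemble des couples d'indices $(s',t')$ sur lesquels tout est testé, de sorte que rien ne peut empirer.
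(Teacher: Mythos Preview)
Ta démonstration est correcte et correspond exactement à ce que le papier laisse implicite~: l'énoncé y figure comme corollaire immédiat de la proposition de Féjoz, sans preuve, et tu as simplement explicité les vérifications de routine (complétude des $\Bt^k(E[s],F[s])$ via la proposition, contractivité et transitivité des restrictions) que la définition \ref{SS::definition} exige. Il n'y a pas d'approche alternative à comparer ici.
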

Cet espace sera appelé {\em l'espace des morphismes $k$-bornés entre $E$ et $F$}, on le note $\Bt^k(E,F)$. Il généralise la notion d'opérateur différentiel d'ordre $k$.

On a une inclusion d'espaces de Kolmogorov
$$\Bt^0(E,F) \subset \Bt^1(E,F) \subset \Bt^2(E,F) \subset \cdots $$
\begin{lemme} 
\label{L::rescale}
Soit $E,F$ des $S$-espaces de Kolmogorov avec $S \leq 1$. L'inclusion 
$$i:\Bt^k(E)\to \Bt^{k+n}(E)$$ est un morphisme borné de norme au plus $S^n$~:
$$ | i \circ u |_t \leq S^n | u |_t $$
pour tout $t \leq S$.
\end{lemme}
\begin{proof}
En effet:
$$| i \circ u(x)|_s \leq \frac{|u |_t}{e^2(t-s)^k}|x|_t=\frac{(t-s)^n|u |_t}{e^2(t-s)^{k+n}}|x|_t  \leq \frac{S^n |u |_t }{e^2(t-s)^{k+n}}|x|_t .$$
\end{proof}
\section{Applications born\'es}
Dans un espace de Banach $E$, on peut définir les applications bornés dans un cadre linéaire, mais aussi pour les applications non linéaires: une application est bornée si l'image d'une boule est contenue dans une boule de rayon suffisamment grand. Cette notion se généralise aux espaces Kolmogorov~:
\begin{definition} Une application entre espaces de Kolmogorov
$$f:E \to F $$
est dite $k$-bornée par $C>0$ si pour tout $R \geq 1$, et tout $x \in E$, on a~:  
 $$ | x |_{t} \leq R(t-s)^k \implies | f(x)|_{s} \leq R C.$$
\end{definition}
Tout application linéaire $k$-borné est borné et plus généralement tout polynôme qui s'obtient à partir d'application bornés est lui-même borné.
\begin{lemme} 
\label{L::bounded_rescale}
Soit $E,F$ des $S$-espaces de Kolmogorov avec $S<1$.
une application
$$f:E \to F$$
$k$-bornée par $C$ est $(k+n)$-bornée $CS^n$.
\end{lemme}
\begin{proof}
Soit $x \in E$ tel que~:
 $$ | x |_{t} \leq R(t-s)^{k+n}.$$
 On a alors~:
  $$ | x |_{t} \leq RS^n(t-s)^k. $$
Comme $f$ est $k$-borné, on en déduit que~:
  $$  | f(x)|_s \leq R S^n C.$$ 
\end{proof}
 \section{R\'eéchelonnement d'un espace de Kolmogorov}
 Outre le fait que les espaces de Kolmogorov permettent d'interpréter de différentes façons des calculs sur les germes, ils mettent en relation la façon dont chaque quantité varie en fonction du paramètre qui définit le système direct.
  
Soit $E$ un espace de Kolmogorov. Le {\em rééchelonnement de $E$} par un facteur $\l >0$ est l'espace de Kolmogorov $E'$ définit par 
$$E'_s:=E[\l]_{\l s} $$
L'espace $E'$ est canoniquement isomorphe à $E$ et nous noterons par des apostrophes, l'image par cet isomorphisme canonique
ainsi que ceux induits sur les espaces d'applications linéaires qui lui sont associés.
 
\begin{proposition}\label{P::rescale} Soit $E,F$ des espaces de Kolmogorov. Les espace de Kolmogorov $E',F'$ obtenus après rééchelonnement par un facteur $\l$ possèdent les propriétés suivantes
\begin{enumerate}[{\rm i)}]
\item L'application canonique $\Bt^k(E[\l])' \to \Bt^k(E') $ est $0$-borné de norme au plus $\l ^{-k}$;
\item Si $\l \leq 1$ toute application $f:E \to F$ bornée par $C$ induit une application
$f':E'_\bullet \to F'_\bullet$ bornée par $\l^{-k} C$;
\end{enumerate}
\end{proposition}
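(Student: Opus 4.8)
Le plan est de dérouler les définitions du rééchelonnement et des morphismes $k$-bornés en surveillant la seule quantité qui change, le facteur $(t-s)^k$. Le point de départ commun aux deux assertions est l'identification des espaces vectoriels sous-jacents. Par définition $E'_s=E[\l]_{\l s}$, muni de la norme $|\cdot|_{\l s}$, d'où, pour tout $s$, $E'[s]=\bigoplus_{\sigma\leq s}E[\l]_{\l\sigma}=E[\l s]$ (les niveaux $\rho=\l\sigma$ parcourent $]0,\l s]$). D'après le corollaire~\ref{C::Borel}, le niveau $s$ de l'espace de Kolmogorov $\Bt^k(E')$ est donc $\Bt^k(E'[s],E'[s])=\Bt^k(E[\l s],E[\l s])$, c'est-à-dire, en tant qu'espace vectoriel, le niveau $\l s$ de $\Bt^k(E)$, autrement dit le niveau $s$ de $\Bt^k(E[\l])'$. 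L'application canonique $\Bt^k(E[\l])'\to\Bt^k(E')$ est alors, à chaque niveau, l'identité sur ces espaces, et elle commute aux restrictions parce que celles-ci, des deux côtés, consistent à restreindre un morphisme complet de $E[\l t]$ à $E[\l s]$; il ne reste donc qu'à comparer les deux normes.

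Pour le point i), je fixerais $w$ au niveau $s$ de $\Bt^k(E[\l])'$ et j'écrirais, pour $a<b\leq s$ et $x\in E_{\l b}$, les égalités $|w(x)|'_a=|w(x)|_{\l a}$ et $|x|'_b=|x|_{\l b}$; l'inégalité définissant la norme de $w$ dans $\Bt^k(E')_s$ devient $|w(x)|_{\l a}\leq \frac{Ce^2}{(b-a)^k}|x|_{\l b}=\frac{C\l^k e^2}{(\l b-\l a)^k}|x|_{\l b}$, avec $\l a<\l b\leq\l s$. La norme de $w$ vue dans $\Bt^k(E')_s$ vaut donc $\l^{k}$ fois sa norme vue dans $\Bt^k(E[\l])'_s$: l'application canonique multiplie chaque norme par $\l^{-k}$, elle est $0$-bornée de norme au plus $\l^{-k}$ (et même exactement). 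Le cas $k=0$ est trivial, le facteur $e^2$ n'apparaissant pas mais se simplifiant de toute façon dans le calcul.

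Pour le point ii), je partirais d'une application $f:E\to F$ $k$-bornée par $C$ et de l'application rééchelonnée $f':E'\to F'$, qui n'est que la restriction de $f$ à $E[\l]$ relue dans le nouveau paramètre. Soient $R\geq 1$, $a<b$ et $x$ avec $|x|'_b\leq R(b-a)^k$; alors $|x|_{\l b}\leq R(b-a)^k=R\l^{-k}(\l b-\l a)^k$, et en posant $R'=R\l^{-k}$, $t=\l b$, $s=\l a$ on obtient $|x|_t\leq R'(t-s)^k$. C'est ici, et uniquement ici, qu'intervient l'hypothèse $\l\leq 1$: elle garantit $R'\geq R\geq 1$, ce qui autorise l'emploi de la $k$-bornitude de $f$ et donne $|f'(x)|'_a=|f(x)|_s\leq R'C=R\l^{-k}C$. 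Donc $f'$ est $k$-bornée par $\l^{-k}C$.

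L'essentiel n'est pas dans les calculs, immédiats, mais dans la gestion soigneuse des identifications de paramètres ($E'[s]=E[\l s]$, $\Bt^k(E')_s=\Bt^k(E[\l s])$ et leur compatibilité aux restrictions) et dans l'observation que le rééchelonnement n'affecte que l'exposant de $(t-s)$. Le point le plus susceptible de piéger est de s'assurer, au point~ii), que la contrainte $R\geq 1$ figurant dans la définition d'application $k$-bornée impose bien la restriction $\l\leq 1$ et n'est pas une simple commodité de rédaction.
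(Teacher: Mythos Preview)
Ta démonstration est correcte et suit exactement l'approche du papier~: pour~i) on déroule la définition de la norme $k$-bornée en remplaçant $(t-s)$ par $\l(t-s)$, et pour~ii) on transporte l'hypothèse $|x|'_b\leq R(b-a)^k$ en $|x|_{\l b}\leq (\l^{-k}R)(\l b-\l a)^k$ avant d'invoquer la $k$-bornitude de~$f$. Attention toutefois à une coquille au milieu de~i)~: tu écris \og la norme de $w$ vue dans $\Bt^k(E')_s$ vaut $\l^{k}$ fois sa norme vue dans $\Bt^k(E[\l])'_s$\fg, alors que ton calcul et ta conclusion montrent l'inverse, $|w|_{\Bt^k(E')_s}=\l^{-k}\,|w|_{\Bt^k(E[\l])'_s}$.
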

\begin{proof}
Commençons par démontrer i). Si $u$ est un morphisme $k$-bornée alors 
$$|u'(x')|_s =| u(x)|_{\l s} \leq \frac{|u|_{\l t}}{e^2(\l t -\l s)^k}|x|_{\l t}=\frac{ |u|_{\l t}}{e^2\l^k(t-s)^k}|x'|_{ t}. $$
Par conséquent~:
$$|u|_{\l t}= \l^k |u'|_t .$$
La démonstration de ii) est identique.  Pour tout $R \geq 1$ et  tout $x \in E$, on a~:  
$$ | x' |_t  \leq R( t- s)^k \implies | x |_{\l t} \leq \l^{-k}R( \l t- \l s)^k \implies | f(x)|_{\l s}\leq \l^{-k}R C.$$
Ce qui démontre la proposition
\end{proof}

Nous dirons qu'une application $k$-bornée est un projecteur si c'est un idempotent de norme au plus $1$. Comme dans l'étude des espaces de Hilbert, les projections jouent un rôle significatif dans la théorie des espaces de Kolmogorov.
\begin{corollaire} 
\label{C::prescale} Toute projection $k$-bornée de $E$ induit une projection $2k$-bornée de $E'$.
\end{corollaire}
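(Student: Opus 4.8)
L'idée est de vérifier séparément les deux conditions définissant un projecteur $2k$-borné de $E'$~: que $p'$ est un morphisme idempotent, et qu'il est $2k$-borné de norme au plus $1$.

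Le premier point est essentiellement gratuit. Le rééchelonnement est muni d'un isomorphisme canonique de systèmes directs entre $E$ et $E'$, et $p'$ est par définition le conjugué de $p$ par cet isomorphisme~; comme la conjugaison par un isomorphisme de systèmes respecte la composition, elle transporte la relation $p \circ p = p$ en $p' \circ p' = p'$. Ainsi $p'$ est idempotent, et il ne reste plus qu'une estimation de norme à établir.

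Pour cette estimation, le plan est de faire passer le morphisme $k$-borné $p$, de norme au plus $1$ sur $E$, à travers la proposition \ref{P::rescale}. Après restriction de $p$ à la troncature $E[\l]$, qui ne change pas sa norme, la partie i) de cette proposition affirme que l'application canonique $\Bt^k(E[\l])' \to \Bt^k(E')$ est $0$-bornée de norme au plus $\l^{-k}$~; donc $p'$ est un morphisme $k$-borné de $E'$ --- a fortiori $2k$-borné, via l'inclusion $\Bt^k(E') \subset \Bt^{2k}(E')$ --- avec $\| p' \|_{\Bt^k(E')} \leq \l^{-k}$. Ce n'est pas encore la conclusion souhaitée, car un projecteur doit être de norme au plus $1$, ce que $\l^{-k}$ n'est pas nécessairement. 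C'est ici que sert l'augmentation de l'ordre de $k$ à $2k$~: après la troncature $E[\l]$, l'espace rééchelonné $E'$ est un espace de Kolmogorov dont le paramètre est au plus $1$ (son ensemble d'indices est $]0,\min(1,S/\l)]$), de sorte que, d'après le lemme \ref{L::rescale}, l'inclusion $\Bt^k(E') \hookrightarrow \Bt^{2k}(E')$ multiplie les normes par au plus la puissance $k$-ième de ce paramètre. En combinant les deux majorations, la norme de $p'$ dans $\Bt^{2k}(E')$ est contrôlée par un produit de puissances de $\l$ et du paramètre de $E'$ que la normalisation du rééchelonnement rend au plus $1$~; un morphisme idempotent $2k$-borné de norme au plus $1$ est, par définition, un projecteur.

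L'étape qui demande réellement du soin --- et que je considère comme l'obstacle principal --- est précisément ce dernier décompte de constantes~: il faut suivre l'interaction entre le paramètre de $E'$, le facteur $\l^{-k}$ de la proposition \ref{P::rescale} i), et le facteur de contraction du lemme \ref{L::rescale} (ou, dans le cas non linéaire, du lemme \ref{L::bounded_rescale}), et vérifier que leur produit reste bien $\leq 1$, afin que $p'$ soit un projecteur de $E'$ et non un simple idempotent $2k$-borné. Tout le reste est formel.
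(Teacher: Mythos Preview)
Votre plan --- composer le lemme~\ref{L::rescale} et la proposition~\ref{P::rescale}~i) --- est exactement celui du papier, mais vous intervertissez les deux étapes, et c'est ce qui fait échouer le décompte de constantes que vous signalez vous-même comme l'obstacle principal.

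Dans votre ordre, après rééchelonnement (facteur $\l^{-k}$), le plongement $\Bt^k(E') \hookrightarrow \Bt^{2k}(E')$ s'effectue sur $E'$, qui est par construction un $1$-espace de Kolmogorov~; le lemme~\ref{L::rescale} donne alors un facteur $1^k = 1$, qui ne compense rien, et votre borne finale reste $\l^{-k}$. L'assertion que \og la normalisation du rééchelonnement rend au plus $1$ \fg{} est donc injustifiée. Le papier procède dans l'autre sens~: il plonge d'abord $\Bt^k(E[\l]) \hookrightarrow \Bt^{2k}(E[\l])$ sur le $\l$-espace $E[\l]$, ce qui fournit un gain non trivial $\l^k$, puis rééchelonne. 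C'est le bon ordre~: le facteur du lemme~\ref{L::rescale} dépend du paramètre de l'espace et n'est utile que si on l'applique \emph{avant} de rééchelonner, tant que ce paramètre vaut encore $\l$ et non $1$. (Un mot de prudence~: en appliquant littéralement la proposition~\ref{P::rescale}~i) avec l'exposant $2k$, le facteur de rééchelonnement devrait être $\l^{-2k}$, alors que le texte annonce $\l^{-k}$~; il y a donc une tension dans les constantes qu'il vaut la peine d'éclaircir, mais elle ne rachète en rien votre ordre d'opérations, où le plongement est de toute façon stérile.)
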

\begin{proof}
D'après le lemme~\ref{L::rescale}, l'inclusion
$$\Bt^k(E[\l]) \to \Bt^{2k}(E[\l])   $$
est $0$-bornée et sa  norme est au plus $\l^k$. Or d'après la Proposition~\ref{P::rescale}  la norme de l'application canonique
$$ \Bt^{2k}(E[\l])'  \to  \Bt^{2k}(E')$$
est au plus $\l^{-k}$. Donc l'application canonique
$$\Bt^k(E[\l])' \to \Bt^{2k}(E')   $$
 est $0$-bornée de norme au plus un. En particulier, elle envoie un projecteur sur un projecteur.
  \end{proof}

\chapter{Th\'eor\`emes de point fixe}
Nous pouvons maintenant généraliser les deux théorèmes de points fixes, qui sont à la base de l'algorithme de Kolmogorov abstrait, à la dimension infinie.
\section{Calcul fonctionnel dans un espace de Kolmogorov}
L'espace des opérateurs bornés d'un espace de Banach $\Lt(E)$ forme une algèbre de Banach pour la norme d'opérateur~:
$$\| u v \| \leq \| u \| \, \| v \| . $$
Il en résulte que pour toute série analytique $f \in \CM\{ z \}$, 
$$u \mapsto f(u)$$ est bien définie dans un voisinage de l'origine. Nous souhaitons étendre ce résultat aux espaces de Kolmogorov.

Si $u,v$ sont des morphismes,  respectivement $k$ et $k'$ borné, alors leur composition $u  v$ est
  $(k+k')$-borné et on a l'inégalité
  $$| u v|_t \leq 2^{k+k'}|u|_t|v|_t .$$
  En effet, comme $u$ est $k$-borné, on a, pour  tout $x\in E$~:
 $$| (u v)(x) |_s \leq  \frac{2^k|u|_{s+(t-s)/2}}{e^k(t-s)^{k}} |v (x)|_{s+(t-s)/2} $$
 et, comme $v$ est $k'$-borné, on a de plus~:
 $$ |v (x)|_{s+(t-s)/2}  \leq   \frac{2^{k'} |v|_t }{e^{k'}(t-s)^{k'}} |x|_{t}. $$
 Par ailleurs $|u|_{s+(t-s)/2}  \leq |u|_{t}$, par conséquent~:
 $$ | (u v)(x) |_s \leq 2^{k+k'}|u|_t|v|_t |x|_{t}. $$ 
  Ce qui démontre l'affirmation.
  
 On considère la transformation de Borel~:
 $$B:\CM\{ z \} \to \CM\{ z\},\ \sum_n a_n z^n \mapsto \sum_n \frac{a_n}{n!} z^n . $$
 Nous notons $\RM_+\{ z\} \subset \CM\{ z\}$, le sous-espace des séries à coefficients réels positifs ou nuls.
 \begin{proposition}
 \label{P::Borel} Soit $E$ un espace de Kolmogorov et $u$ un morphisme $k$-borné de $E$ tel que
   $$\nu:=\frac{| u |_t}{t-s} < R(f) ,$$
alors
$$| Bf(u) x |_s \leq f(\nu)   |x |_t$$
pour tout $x \in E_t$. 
 \end{proposition}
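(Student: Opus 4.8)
The plan is to reduce the statement to the scalar power-series estimate by working term-by-term on the monomials $z^n$ in $f$. Write $f(z) = \sum_{n \geq 0} a_n z^n$ with $a_n \geq 0$, so that $Bf(u) = \sum_{n \geq 0} \frac{a_n}{n!} u^n$. First I would show, by induction on $n$ using the composition estimate $|uv|_t \leq 2^{k+k'}|u|_t|v|_t$ already established above, that the iterate $u^n$ is $nk$-bounded and satisfies a clean bound of the shape $|u^n(x)|_s \leq (\text{something})^n |x|_t$. The natural way to do this is not to apply the crude composition bound $n-1$ times, but to slice the interval $[s,t]$ into $n$ equal pieces of length $(t-s)/n$: writing $s = s_0 < s_1 < \dots < s_n = t$ with $s_i - s_{i-1} = (t-s)/n$, one applies $k$-boundedness of $u$ successively on each sublayer $E_{s_i} \to E_{s_{i-1}}$ (this uses crucially that the morphisms are \emph{complete} families, so a single $u$ restricts coherently to every layer). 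This gives
$$|u^n(x)|_s \leq \prod_{i=1}^n \frac{C e^2}{(s_i - s_{i-1})^k} |x|_t = \left(\frac{Ce^2 n^k}{(t-s)^k}\right)^n |x|_t = \left(\frac{e^2 n^k \nu}{?}\right)\cdots$$
—more precisely, since $|u|_t \geq C e^2$ is the relevant normalization, one gets $|u^n(x)|_s \leq \left(\dfrac{|u|_t\, n^k}{(t-s)^k}\right)^n |x|_t = \nu^n n^{nk} |x|_t$ after absorbing constants, where $\nu = |u|_t/(t-s)$; the factor $e^2$ in Definition~\ref{D::borne} is precisely what makes these constants come out cleanly, which is why it was inserted there.

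**Summing the series.**

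Having the monomial bound $|u^n(x)|_s \leq \nu^n n^{nk}|x|_t$, I would sum:
$$|Bf(u)(x)|_s \leq \sum_{n \geq 0} \frac{a_n}{n!}\, \nu^n n^{nk} |x|_t.$$
The point is that $\frac{n^{nk}}{n!}$ grows only like $e^{nk} n^{nk - n + 1/2}/\sqrt{2\pi}$ by Stirling, and one wants to compare this with the radius of convergence $R(f)$. For the case $k=1$ one has exactly $n^n/n! \leq e^n$, so $\sum \frac{a_n}{n!} n^n \nu^n \leq \sum a_n (e\nu)^n$; but this would require $e\nu < R(f)$, not $\nu < R(f)$. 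So the clean statement as written presumably intends $k=1$ together with the convention that the $e^2$ in the norm has already been arranged so that the operative quantity is $|u|_t/(e^2(t-s))$ rather than $|u|_t/(t-s)$, or else $f$ is handled with its Borel-type growth in mind. The honest version of the argument: choose the slicing so that layer $i$ has width $\theta_i(t-s)$ with $\sum \theta_i = 1$ optimized (equal slices give the $n^n$ factor), invoke $n^n \leq e^n n!$, and conclude $|Bf(u)(x)|_s \leq \sum_n a_n e^n \nu^n |x|_t \cdot (e^2)^{-n}\cdots$, the powers of $e^2$ from the $n$ applications of Definition~\ref{D::borne} exactly cancelling the $e^n$ from Stirling and leaving $\sum a_n \nu^n |x|_t = f(\nu)|x|_t$, which is finite and equals the claimed bound precisely because $\nu < R(f)$.

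**The main obstacle.**

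The delicate point — and the step I expect to absorb the real work — is the bookkeeping of constants in the layered composition estimate: making sure that the $n$-fold product of the factors $Ce^2/(t-s)^k$ from Definition~\ref{D::borne}, after the interval is cut into $n$ equal pieces, produces exactly $(e^2)^n$ times $(|u|_t/(t-s))^n$ times $n^{nk}$, and then that $n^{nk}/n!$ is dominated by $(e^2)^{n}$ (true for $k=1$ via $n^n \le e^n n!$; for general $k$ one needs $e^{nk}$ and hence the hypothesis must secretly be about $e^{k-2}\nu$ or $\nu$ with $k \le 2$), so that the cancellation is clean and the residual series is literally $\sum a_n \nu^n = f(\nu)$. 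Once the constant-chasing is pinned down, the convergence is immediate from $\nu < R(f)$ and $f \in \RM_+\{z\}$ (positivity of coefficients is what lets us bound $|Bf(u)x|_s$ by the termwise sum with no cancellation worry, and also guarantees $f(\nu)$ is a legitimate non-negative real bound). The rest is routine: continuity/boundedness of $Bf(u)$ as a morphism of Kolmogorov spaces then follows since the estimate holds uniformly on every pair $s<t$.
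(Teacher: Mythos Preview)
Your approach is essentially identical to the paper's: slice $[s,t]$ into $n$ equal pieces, apply the boundedness estimate on each layer to obtain
$$|u^n(x)|_s \leq \frac{n^n}{e^{2n}(t-s)^n}\,|u|_t^n\,|x|_t,$$
then invoke $n^n/e^n \leq n!$ so that the leftover $e^{-n}$ combines with $|u|_t/(t-s)$ and the $1/n!$ in the Borel transform cancels the $n!$, leaving $\sum a_n \nu^n |x|_t = f(\nu)|x|_t$. You have also correctly diagnosed that the argument, as written, is really the case $k=1$ (this is exactly what the paper's proof does, despite the statement allowing general $k$), and that the factor $e^2$ in Definition~\ref{D::borne} is there precisely to make this cancellation clean.
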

 \begin{proof}
 En effet, en découpant l'intervalle $[s,t]$ en $n$ parties égales et en utilisant le fait que $ | u |_{s'} \leq | u |_t$ pour tout $s'\leq t$, on obtient~:
     $$|u^n(x)|_s \leq  \frac{n}{e^2(t-s)}| u |_{s+(t-s)/n}  |u^{n-1}(x)|_{s+(t-s)/n} \leq \dots \leq  \frac{n^n}{e^{2n}(t-s)^n}  | u |_t^n |x|_{t}  . $$
 Or
$$ \frac{n^n}{e^n} \leq  n!\,,$$
donc~:
$$\frac{n^n  |u|_t^n}{e^{2n}(t-s)^n}   \leq  n!  \left( \frac{ | u |_t}{e(t-s)}\right)^n$$
et~::
$$ | u^n(x) |_{s}   \leq  n! \left( \frac{ | u |_t}{e(t-s)}\right)^n  | x |_t.$$
Ce qui nous donne bien~:
$$|   \sum_{n \geq 0} \frac{a_n}{n!} u^n( x) |_{s} \leq \sum_{n \geq 0}  \frac{a_n}{n!} | u^n(x) |_{s} \leq \sum_{n \geq 0} a_n \left(\frac{|u|_t}{t-s} \right)^n | x |_t=f(\nu)   |x |_t.$$
 \end{proof}  
 
Appliquons la proposition précédente à la série~:
$$e^z=B\left(\frac{1}{1-z}\right) \in \CM\{ z \}.$$ 
Nous obtenons que
$$| e^u x |_{s} \leq \frac{1}{1-\nu }   |x |_t,$$
pourvu que
$$  \nu=\frac{| u |_t}{t-s} <1.$$
Nous avons ainsi généralisé la correspondance entre algèbre de Lie et groupe de Lie aux espace de Kolmogorov~:
\begin{corollaire} Soit $u \in \Bt(E)^{(1)} $ un morphisme $1$-borné tel que
$$\forall t \leq S,\ | u |_t<t. $$
Pour toute fonction croissante
$$\phi:]0,S] \to ]0,S] $$
telle que
$$\forall t \leq S,\ \phi(t)<t- | u |_t $$ la série $e^u$ définit un morphisme qui envoie $E_t$ sur $E_s$ avec $ s~=~\phi(t)$.
\end{corollaire}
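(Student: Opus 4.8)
The plan is simply to specialise, for each $t$, the Borel estimate for $e^u$ derived just before the statement — itself an instance of Proposition~\ref{P::Borel} applied to $e^z = B\!\left((1-z)^{-1}\right)$, for which $f(z) = (1-z)^{-1} \in \RM_+\{z\}$ has radius of convergence $R(f) = 1$ — to the index $s = \phi(t)$.

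First I would fix $t \leq S$ and set $s = \phi(t)$. From the hypothesis $\phi(t) < t - |u|_t$ we get $t - s > |u|_t \geq 0$, hence
$$\nu := \frac{|u|_t}{t - s} < 1 = R(f),$$
which is exactly the condition under which Proposition~\ref{P::Borel} (with $k = 1$, $f(z) = (1-z)^{-1}$) gives
$$|e^u x|_s = |Bf(u)\,x|_s \leq \frac{1}{1 - \nu}\,|x|_t < +\infty \qquad \text{for all } x \in E_t.$$
Thus the series $\sum_{n \geq 0} u^n x / n!$ converges in the Banach space $E_s$ and defines a bounded linear map $E_t \to E_{\phi(t)}$. (The standing assumption $|u|_t < t$ is what guarantees that the constraint $0 < \phi(t) < t - |u|_t$ can be met and that $\phi(t) < t \leq S$, so that $s$ is an admissible index.)

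Finally I would check that these maps assemble into a morphism of the underlying direct system: for $t' < t$, monotonicity of $\phi$ gives $\phi(t') \leq \phi(t)$, so the restriction $E_{\phi(t)} \to E_{\phi(t')}$ is defined; each power $u^n$, being a composition of complete morphisms, commutes with the restriction maps, and the uniform estimate above lets one pull the restriction inside the absolutely convergent series, so $e^u$ commutes with all restrictions. I do not anticipate any genuine obstacle here: all the analytic content is already contained in Proposition~\ref{P::Borel}, and the geometric subdivision of $[s,t]$ that ensures every $u^n$ lands in $E_s$ is built into that proposition; what remains is only the elementary bookkeeping with $\phi$ and the routine verification of compatibility with restrictions.
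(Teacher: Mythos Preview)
Your proposal is correct and follows exactly the paper's approach: the corollary is stated as an immediate consequence of applying Proposition~\ref{P::Borel} to $e^z = B\bigl((1-z)^{-1}\bigr)$, and your argument simply unpacks this by checking that the hypothesis $\phi(t) < t - |u|_t$ yields $\nu < 1$ so that the estimate $|e^u x|_s \leq (1-\nu)^{-1}|x|_t$ applies at $s = \phi(t)$. The extra bookkeeping you add (compatibility with restrictions) is routine and not spelled out in the paper, but there is no difference in substance.
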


Le corrolaire suivant sera pratique pour montrer que l'algorithme de Kolmogorov est à convergence rapide~:
 \begin{corollaire} 
 \label{C::attracteur}
 Si une série $f \in \RM_+\{ z\}$ appartient à la puissance $k$\up{ième} de l'idéal maximal~:
$$f=z^kg(z),\ g \in \RM_+\{ z\}, $$
alors pour tout $r $ inférieur au rayon de convergence de $f$, on a:
$$| Bf(u) x |_s \leq g(r) \nu^k  |x |_t .$$
\end{corollaire}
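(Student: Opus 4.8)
The plan is to re-run the estimate from the proof of Proposition~\ref{P::Borel}, keeping track only of the non-vanishing Taylor coefficients of $f$ so that a factor $\nu^{k}$ drops out, and then to bound the remaining series evaluated at $\nu$ by its value at $r$. Write $f=\sum_{n\ge 0}a_{n}z^{n}$ with all $a_{n}\ge 0$; the hypothesis $f=z^{k}g(z)$ says precisely that $a_{n}=0$ for $n<k$, and if $g=\sum_{m\ge 0}b_{m}z^{m}$ then $a_{k+m}=b_{m}$. Hence $Bf(u)=\sum_{n\ge k}\frac{a_{n}}{n!}u^{n}$ involves only the powers $u^{n}$ with $n\ge k$.

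First I would invoke the bound on the iterated powers of $u$ established in the proof of Proposition~\ref{P::Borel}, namely $|u^{n}(x)|_{s}\le n!\,\nu^{n}\,|x|_{t}$ for every $n\ge 0$ and every $x\in E_{t}$ (this comes from cutting $[s,t]$ into $n$ equal sub-intervals and does not involve $f$). Summing termwise and using $a_{n}\ge 0$,
$$|Bf(u)x|_{s}\;\le\;\sum_{n\ge k}\frac{a_{n}}{n!}\,|u^{n}(x)|_{s}\;\le\;\Bigl(\sum_{n\ge k}a_{n}\nu^{n}\Bigr)|x|_{t}.$$

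Then I would factor out $\nu^{k}$: since $a_{k+m}=b_{m}$,
$$\sum_{n\ge k}a_{n}\nu^{n}\;=\;\nu^{k}\sum_{m\ge 0}b_{m}\nu^{m}\;=\;\nu^{k}\,g(\nu),$$
and a power series with nonnegative coefficients is nondecreasing on $[0,R(g))=[0,R(f))$, so $g(\nu)\le g(r)$ for every $r$ with $\nu\le r<R(f)$. Combining the two displays gives $|Bf(u)x|_{s}\le \nu^{k}g(r)\,|x|_{t}$, which is the asserted inequality.

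There is no genuine analytic difficulty here: the argument reduces entirely to the one-line iterated estimate of Proposition~\ref{P::Borel} together with the monotonicity of a series with positive coefficients. The only point deserving attention is that the step $g(\nu)\le g(r)$ requires $\nu\le r$, so the corollary is to be read for all $r$ lying between $\nu$ and the radius of convergence of $f$; this is exactly the regime in which it will be used to establish the fast convergence of Kolmogorov's algorithm, where $\nu$ is small and $r$ is chosen just above it.
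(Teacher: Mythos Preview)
Your argument is correct and is exactly the one the paper has in mind: the corollary is stated without proof, as an immediate consequence of Proposition~\ref{P::Borel}. In fact you could shorten it further by quoting the conclusion $|Bf(u)x|_{s}\le f(\nu)\,|x|_{t}$ of the proposition directly and then writing $f(\nu)=\nu^{k}g(\nu)\le\nu^{k}g(r)$, rather than re-deriving the iterated bound on $|u^{n}(x)|_{s}$; your observation that the step requires $\nu\le r$ is well taken and is indeed implicit in the intended use of the corollary.
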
 

 \section{Premier th\'eor\`eme de point fixe}
 Comme nous l'avons vu en dimension finie, la convergence du procédé itératif de Kolmogorov découle d'un théorème de point fixe. Dans le cas où la transversale est réduite à $\{0\}$, l'énoncé   est particulièrement simplifié~:
  \begin{theoreme}
 \label{T::point_fixe} Soit $E$ un $S$-espace de Kolmogorov, $f \in \Mt^2 \subset \CM\{ z \}$ une série convergente et
 $$ j:E \to E$$ un morphisme borné. Pour tout $t \in ]0,S]$, il existe un voisinage de l'origine $B_t\subset E_t$ tel que pour tout $u_0 \in B_t$ et tout $q \in ]1,2[$, $\rho \in ]0,1[$, la suite
 $$u_{n+1}=j \circ Bf(u_n) $$
 vérifie
 $$| u_n |_s =O(\rho^{q^n}) $$
 pour $s$ suffisamment petit.
 \end{theoreme}
  \begin{proof}
Si le théorème est démontré pour $s$ assez grand en rééchelonnant notre espace, cela entraîne le théorème pour tout $s$.
  
  Soit $k$ tel que $j$ soit $k$-borné.  Comme $f$ possède un point critique à l'origine, d'après le corollaire~\ref{C::Borel}, il existe $C,r$ tel que
$$(*)\ |jBf(u)|_s \leq C(t-s)^{-k}| u|_t^2$$
pourvu que $| u|_t/(t-s) \leq r$.  
Soit  $\rho \leq r$ et $l>0$ tels que~:
$$ 2^{k(n+1)/l}\rho^{(2-q)q^n} \leq  1.$$

 Définissons la suite $(s_n)$ par
$$ s_{n+1}=s_n-\frac{1}{2^{n/l}},\ s_0=t.$$
Nous allons démontrer le théorème pour
$$s >\sum_{n \geq 0} \frac{1}{2^{n/l}}.$$
Cette inégalité nous permet de garantir que $\lim s_n>0$.
     
Quitte à multiplier les normes par une même constante,on peut supposer que
$$C=1.$$
Nous définissons le voisinage de l'origine par la condition
 $$  | u_0|_{s_0}^2 \leq  \rho.$$
 Montrons par récurrence que
 $$  | u_n|_{s_n}^2 \leq  \rho^{q^n}.$$
 On a~: 
 $$u_{n+1}=j\circ Bf(u_n) $$
 donc d'après (*)~:
 $$| u_{n+1}|_{s_{n+1}} \leq 2^{k(n+1)/l}| u_{n}|_{s_n}^2 $$
 et, par hypothèse de récurrence, on a 
 $$ | u_{n}|_{s_n}^2 \leq 2^{k(n+1)/l} \rho^{2 q^n}= 2^{k(n+1)/l} \rho^{(2-q)q^n}\rho^{q^{n+1}}. $$
Or
 $$2^{k(n+1)/l} \rho^{(2-q)q^n} \leq 1,  $$
 ce qui démontre le théorème.
 \end{proof}

\section{Deuxi\`eme th\'eor\`eme de point fixe}
 \begin{theoreme} 
 \label{T::point_fixe_2}
 Soit $E,F$ des $S$-espaces de Kolmogorov et $f_1,f_2 \in \CM\{ z,w \}$ deux séries analytiques telles que 
 $$f_i(z,0)=\d_w f_i(z,0)=0$$
 et
  $$j:F \times E \to F \times E $$
 une application bornée. Posons
$$G: F \times E\to F \times E ,\ (a,x) \mapsto j(a+f_1(a,x),f_2(a,x))  $$
Pour tous $a \in F$, $\rho<1$, $\e>0$, $t \in ]0,S]$ il existe un voisinage $B_t$ de l'origine dans $E_t$ tel que pour tout $x \in B$
La suite $(a_n,x_n)=G^n(a,x)$ est convergente et~:
$$| x_n |_s =O( \rho^{2^{n-\e}}) $$
pour tout $s$ suffisamment petit.
\end{theoreme}
\begin{proof}
Comme précédemment, il suffit de démontrer le théorème pour $s$ assez grand.
  
  Soit $k$ tel que $j$ soit $k$-borné.

 D'après le corollaire~\ref{C::Borel}, il existe $C,R,r$ tel que
$$(*)\ |jBf_1(b,u)|_s+|jBf_2(b,u)|_s \leq C(t-s)^{-k}| u|_t^2$$
pourvu que $| u|_t/(t-s) \leq \rho $ et $| b|_t \leq R$. De plus quitte à multiplier les normes par une constante et à remplacer $\rho$ par un réel qui lui est inférieur, on peut choisir $l$ pour que
\begin{align*}
\rho<R;\\
\sum_{i=0}^n \rho^{2^i} \leq \frac{1}{2} ;\\
 C=1;\\
 2^{k(n+1)/l}\rho^{(2-q)q^n} \leq  1
\end{align*}

Définissons  la suite $(s_n)$ par
$$ s_{n+1}=s_n-\frac{1}{2^{n/l}},\ s_0:=t.$$
Nous allons démontrer le théorème pour
$$t >\sum_{n \geq 0} \frac{1}{2^{n/l}}.$$

 Nous définissons le voisinage de l'origine par la condition
 $$  | x_0|_{s_0} \leq  \rho,\ | a |_{s_0} \leq \frac{R}{2}.$$
 Montrons par récurrence que
 $$  | x_n|_{s_n} \leq  \rho^{q^n},\ | a_n |_{s_n} \leq \frac{R}{2}+ R \sum_{i=0}^n \rho^{2^i}.$$
 Comme dans le cas homogène~: 
 $$| j\circ Bf_i(x_n) |_{s_{n+1}} \leq  \rho^{q^{n+1}}. $$
 et par suite
 $$   | x_{n+1}|_{s_n} \leq  \rho^{q^{n+1}},\  | a_{n+1} |_{s_{n+1}} \leq \frac{R}{2}+ R \sum_{i=0}^{n+1} \rho^{2^i}<R  $$
 Le théorème est démontré.
 \end{proof}

 \chapter{Le th\'eor\`eme de Kolmogorov abstrait}

\section{Th\'eor\`eme des exponentielles}
L'algorithme de Kolmogorov fait intervenir un produit infini d'exponentielle, le théorème des exponentielles
 nous permet de garantir que ce produit infini
converge.
\begin{theoreme} 
\label{T::convergence}
Soit $E$ un espace  un espace de Kolmogorov et $(u_n) \subset \Bt^1_\tau(E)$ une suite de $\tau$-morphismes $1$-bornés.
Supposons que pour tout $s$, on ait 
$$\sum_{i \geq 0} | u_i|_s < s  $$
 alors la suite $(g_n)$ définie par
$$g_n:=e^{u_n}e^{u_{n-1}}\cdots e^{u_0}  $$
converge vers un élément inversible de $\Lt(E)$.
 \end{theoreme}
 \begin{proof}
La transformée de Borel définie pour les séries en une variable se généralise aisément au cas de $n$ variables~:
$$B:\CM\{ z_1,\dots,z_n \} \to \CM\{ z_1,\dots,z_n\},\ \sum_I a_I z^I \mapsto \sum_I \frac{a_I}{|I|!} z^I  $$
où $I=(i_1,\dots,i_n)$ est un multi-indice et $|I|=i_1+i_2+\dots+i_n$.
 \begin{proposition} Soit $E$ un espace de Kolmogorov et $f_1,\dots,f_n \in \RM_+\{ z\}$ des séries convergentes d'une variable. Notons
 $g_i=Bf_i$ la transformée de Borel de $f_i$. Pour tout $ \l \in \min_i( ]0, 1-\frac{| u_i |_s}{s}[$), l'application
 $$\left(\Bt^1(E)\right)^{n+1} \to \Lt(E),\ (u_0,\dots,u_n) \mapsto g_0(u_0)g_1(u_1) \dots g_n(u_n) $$
  qui envoie $E_s$ sur $E_{\l s}$
 est bien définie et
 $$| \prod_{i=0}^n g_i(u_i) x |_{s}\leq B^{-1} \left(\prod_{i=0}^n g_i\left(\frac{| u_i |_t}{t-s}\right) \right) |x|_{t}. $$
 \end{proposition}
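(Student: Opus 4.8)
The plan is to expand both members as multiple power series in the morphisms $u_i$, to bound each monomial $u_0^{m_0}\cdots u_n^{m_n}x$ by the $[s,t]$-subdivision device already used in the proof of Proposition~\ref{P::Borel}, and then to recognise the resulting sum as the inverse multivariable Borel transform of $\prod_i g_i$ evaluated at the point $(|u_i|_t/(t-s))_i$. Convergence of that transform at this point — which is exactly what the hypothesis on $\l$ encodes — will at the same time justify every formal manipulation.

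Concretely, I would first write $f_i=\sum_{m\geq 0}a_{i,m}z^m$ with $a_{i,m}\geq 0$ (recall $f_i\in\RM_+\{z\}$), so that $g_i=Bf_i=\sum_{m\geq 0}\frac{a_{i,m}}{m!}z^m$ and, at least formally,
$$\prod_{i=0}^n g_i(u_i)\,x=\sum_{m_0,\dots,m_n\geq 0}\frac{a_{0,m_0}\cdots a_{n,m_n}}{m_0!\cdots m_n!}\;u_0^{m_0}u_1^{m_1}\cdots u_n^{m_n}\,x .$$
Fix $s<t\leq S$ and $x\in E_t$, and write $M=m_0+\dots+m_n$. The operator $u_0^{m_0}\cdots u_n^{m_n}$ is a word of length $M$ in the $1$-bounded morphisms $u_0,\dots,u_n$; cutting $[s,t]$ into $M$ equal subintervals, applying $1$-boundedness factor by factor, and using $|u_i|_{s'}\leq|u_i|_t$ for $s'\leq t$ — exactly the device in the proof of Proposition~\ref{P::Borel} — one gets
$$|u_0^{m_0}\cdots u_n^{m_n}\,x|_s\leq\frac{M^M}{e^{2M}(t-s)^M}\,|u_0|_t^{m_0}\cdots|u_n|_t^{m_n}\,|x|_t\leq M!\prod_{i=0}^n\left(\frac{|u_i|_t}{e(t-s)}\right)^{m_i}|x|_t ,$$
the last inequality being $M^M\leq e^M M!$.

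Summing over all multi-indices $m$ and discarding the harmless factor $e^{-M}\leq1$ gives
$$\left|\prod_{i=0}^n g_i(u_i)\,x\right|_s\leq\sum_{m}\frac{M!\,a_{0,m_0}\cdots a_{n,m_n}}{m_0!\cdots m_n!}\prod_{i=0}^n\left(\frac{|u_i|_t}{t-s}\right)^{m_i}|x|_t .$$
Since $\prod_{i=0}^n g_i(w_i)=\sum_m\frac{a_{0,m_0}\cdots a_{n,m_n}}{m_0!\cdots m_n!}\,w^m$ and the inverse multivariable Borel transform multiplies the coefficient of $w^m$ by $|m|!=M!$, the right-hand side is precisely $B^{-1}\left(\prod_{i=0}^n g_i\right)$ evaluated at $w_i=|u_i|_t/(t-s)$, which is the asserted inequality.

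What is then left is well-definedness, i.e. the fact that $B^{-1}\left(\prod_i g_i\right)$ actually converges at that point. Bounding $a_{i,m}\leq C_iR_i^{-m}$ with $R_i>0$ the radius of convergence of $f_i$, and using $M!/(m_0!\cdots m_n!)\leq(n+1)^M$, one sees that $B^{-1}\left(\prod_i g_i\right)$ has a positive polyradius of convergence, and the hypothesis on $\l$ is exactly the requirement that $(|u_i|_t/(t-s))_i$ lie inside it (for $f_i=1/(1-z)$ this reads $\sum_i|u_i|_t<t-s$, matching the hypothesis $\sum_i|u_i|_s<s$ of Theorem~\ref{T::convergence}). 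The same estimate makes the double series for $\prod_i g_i(u_i)$ absolutely convergent in $\Lt(E_t,E_s)$, hence a genuine morphism $E_t\to E_s$, and these assemble into a complete family as $s$ varies. I expect the only genuinely delicate point to be this bookkeeping — reconciling the crude polyradius coming from the $(n+1)^M$ bound with the precise domain hidden in the condition on $\l$, and checking that the length-$M$ subdivision is performed uniformly enough for the term-by-term bounds to be summed. The analytic heart is unchanged from Proposition~\ref{P::Borel}.
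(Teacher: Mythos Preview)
Your approach is correct and is essentially the same as the paper's: expand the product, bound each word of length $M$ by cutting $[s,t]$ into $M$ equal pieces exactly as in Proposition~\ref{P::Borel}, and identify the resulting majorant series as the inverse multivariable Borel transform of $\prod_i g_i$. The only cosmetic difference is indexing: the paper parametrises the monomials by weakly decreasing sequences $I\in T_{j,n}\subset\{0,\dots,n\}^j$ (equivalent to your exponent tuples $(m_0,\dots,m_n)$ with $|m|=j$), but the analytic content is identical.
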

 \begin{proof} 
Notons $T_{j,n} \subset \ZM^j$ l'ensemble des éléments $ I=(i_1,\dots,i_j)$ dont les coordonnées sont dans
 $\{0,\dots,n \}$ qui appartiennent au simplexe $x_{i+1} \leq x_i$ pour $i=1,\dots,j$. 
   
Pour tout $I=(i_1,\dots,i_j) \in T_{j,n}$ on note $\s(I)$ le vecteur dont les composantes sont obtenues à partir de $I$ par permutation pour que
$\s(I)_p \geq \s(I)_{p+1}$. 

Posons
\begin{enumerate}[{\rm 1)}]
\item$g_n(z)=\sum_k a_k^n z^k$ ;
\item $u[I]:= u_{\s(I)_1} u_{\s(I)_2} \cdots u_{\s(I)_j},\ I \in C_{j,n} $ ;
\item $a[I]:=a_{i_1}^1a_{i_2}^2\dots a_{i_n}^n $
\end{enumerate}

Regroupons les termes du produit de la façon suivante~:

\begin{align*}
g_0(u_0)g_1(u_1) \dots g_n(u_n) &=  \sum_{j \geq 0} \sum_{I \in T_{j,n}} a[I] u[I]\\ & = a_0+\sum_{i=0}^n a^i_1 u_i+\sum_{i=0}^n a^i_2 u_i^2
+\sum_{j=0}^n \sum_{i=j+1}^n  a^i_1 a^j_1 u_i u_j +\dots .
 \end{align*}

Posons 
$$z_{I,s}:=|u_{i_1}|_s |u_{i_2}|_s \cdots |u_{i_n}|_s.$$
En raisonnant comme dans la démonstration de la proposition précédente, on obtient l'inégalité~:
$$\frac{1}{j!} | u[I]|_s  \leq \prod_{p=0}^j | u_{i_p}|_s=  z_{I,s}$$
et par suite
$$\left| u[I] (x)\right|_{\l s} \leq  \a^{-j} z_{I,s}  | x|_s,\ \forall \l \in ]0,1[. $$
avec
$$ \a=\frac{1}{(1-\l)s},$$
On obtient ainsi l'estimation
$$| g_n x |_{\l s} \leq  \left( \sum_{j \geq 0}j!\a^j \sum_{I \in T_{j,n}} a[I] z_{I,s} \right)| x|_s=B^{-1}(f(\a z_{1,s})f(\a z_{2,s}) \dots f(\a z_{n,s}))| x|_s  .  $$
Ceci démontre la proposition.
 \end{proof}
 \begin{exemple}Prenons $g_0=\dots=g_n=e^z$, on a alors
$$\prod_{i=0}^n g_i(z_i)=e^{z_1+\dots+z_n} $$
et
$$ B^{-1} \left(\prod_{i=0}^n g_i\right)=\frac{1}{1-(z_1+\dots+z_n)} $$
donc la proposition nous donne l'estimation
$$| \prod_{i=0}^n e^{u_i} x |_{s}\leq  \frac{1}{1-(z_1+\dots+z_n)}  |x|_t $$
avec
$$z_i=\frac{| u_i |_t}{t-s}. $$
\end{exemple}

 Achevons la démonstration du théorème. Pour cela, fixons $s$.  
Par hypothèse, on a
$$ \sum_{i \geq 0} |u_i|_s<s .$$
On peut donc choisir $\l \in]0,1[$ tel que
$$ \sum_{i \geq 0} |u_i|_s<(1-\l)s.  $$

 Notons $\| \cdot \|_{\l}$ la norme d'opérateur dans $\Lt(E_s,E_{\l s})$. 
La proposition précédent donne l'estimation
$$ \| g_n \|_\l \leq    \frac{1}{1-\frac{3}{(1-\l)s}\sum_{i \geq 0} |u_i|_s}.$$
La suite $(g_n)$ définit donc, par restriction, une suite uniformément bornée d'opérateurs dans $\Lt(E_s,E_{\l s})$. 

Soit à présent $\mu \in ]0,1[$ tel que
 $$\sup_{i \geq 0}  |u_i|_{\l s} <(1-\mu)\l s .$$ 
 Nous allons montrer que la suite $(g_n)$ définit, par restriction, une suite de Cauchy dans $\Lt(E_s,E_{\mu \l s})$. La proposition en découlera, car ce dernier est un espace de Banach pour la norme d'opérateur. 

Je dis que la série de terme gé\-né\-ral $ \| g_n-g_{n-1}\|_{\l \mu }$ est convergente. Pour le voir, écrivons
$$g_n-g_{n-1}=(e^{u_n}-\Id)g_{n-1} $$
où $\Id \in \Lt(E)$ désigne l'application identité.

La fonction $e^z-z$ est la transformée de Borel de $z/(1-z)$ donc, d'après la Proposition~\ref{P::Borel}~: 
$$ | (e^{ u_n}-\Id) y |_{\l \mu s} \leq  
\frac{  \nu_n}{1-\nu_n} | y |_{\l s} ,$$
avec
$$\nu_n:=\frac{|u_n|_{\l s}}{(1-\mu)\l s}. $$
pour tout $y \in E_{\l s}$.

 En prenant $y=g_{n-1} x$, ceci nous donne l'estimation
$$ \| (e^{ u_n}-\Id) g_{n-1} \|_{\l \mu}  \leq \frac{  \nu_n}{(1-\nu_n)(1-\frac{1}{(1-\l)s}\sum_{i \geq 0} |u_i|_s)}$$
On  a donc
$$ \| (e^{ u_n}-\Id) g_{n-1} \|_{\l \mu}=O(\nu_n)=O(|u_n|_{\l s}).$$
La suite réelle $\| g_n-g_{n-1} \|_{\l \mu} $ est par conséquent sommable. Ceci montre que la suite $(g_n)$ converge vers un élément
$g \in \Lt(E_s,E_{\l \mu s})$ pour tout $s$. 

On démontre de même que la suite $(h_n)$  définie par
$$h_n=e^{- u_0}e^{- u_1} \cdots e^{-u_n}$$ 
converge vers un élément $h \in \Lt(E)$. Pour tout $n \in \NM$, on a~:
$$g_nh_n=h_ng_n=\Id$$
 donc $gh=hg=\Id$. Ce qui montre que $h$ est l'inverse de $g$.
 Le théorème est démontré. 
 \end{proof}
 \section{Th\'eor\`eme de Kolmogorov abstrait}

Nous obtenons notre premier résultat important~:
  \begin{theoreme}
 \label{T::groupes} Soit $E$ un espace de Kolmogorov, $a \in E$, $M$ un sous-espace de Kolmogorov de $E$, $\alg$ un sous-espace vectoriel de $\Bt^1(E)^{(2)}$ tel que l'application
  $$\rho:\alg \to M,\ u \mapsto u(a)$$
  soit bien définie. Soit $G$ un sous-groupe fermé de $\Lt(E)$ contenant $\exp(\alg)$. Si $\rho$
  possède un inverse à droite borné alors l'orbite de $a$ sous l'action de $G$ est égale à $a+M$.
 \end{theoreme}
 \begin{proof} 
 Notons 
 $$j:E \mapsto \alg$$
   l'inverse de l'application $\rho$. L'algorithme de Kolmogorov est donné par~:
\begin{enumerate}[1)]
\item $b_{n+1}:=e^{-u_n}(a+b_n)-a$ ;
\item  $u_{n+1}:=j(b_{n+1}).$
\end{enumerate}
avec $ b_0=b,\ u_0=j(b) $. Comme les $u_n \in \Bt^1(E)^{(2)} $, ils sont exponentiables.

Dans cette itération, la suite $(u_n)$ est définie par la formule
$$u_{n+1}=j(e^{-u_n}(\Id+u_n)(a)). $$
La fonction
$$x \mapsto e^{-x}(\Id+x)  $$
est la transformée de Borel d'une fonction avec un point critique à l'origine et $j$ est $k$-borné pour un certain $k$. Comme l'inclusion
$$\psi:\Bt^k(M,\alg)[t] \to \Bt^{k+2}(M,\alg)[t] $$
a une norme qui tend vers $0$ en  $t^2$. L'espace de Kolmogorov rééchelonné par $t$~:
$$ E'_s=E_{st}$$
induit un morphisme 
$$\p:\Bt^1(E) \to \Bt^1(E') $$
de norme majoré par $t^{-1}$. Donc la norme $|\p(u_n)|_t$ tend vers $0$ avec $t$. Par conséquent, en appliquant le théorème du point fixe (Théorème~\ref{T::point_fixe}) avec $t$ suffisamment petit, on trouve $\rho<1$ tel que~:
$$|\p(u_n)|_s =o(\rho^{2^n}) $$
et par suite~:
$$|u_n|_s =o(\rho^{2^n}s)  $$
pour $s$ assez petit.
D'après le Théorème de Convergence (Théorème~\ref{T::convergence}), la suite formée par les produits
$$e^{u_n} \dots e^{u_1}  e^{u_0} $$
converge. Ceci   achève la démonstration du théorème.
 \end{proof}
La généralisation au cas non-homogène est immédiate~:
    \begin{theoreme}
 \label{T::groupes} Soit $E$ un espace de Kolmogorov, $G$ un sous-groupe fermé de $\Lt(E)$ et $\alg$ un sous-espace vectoriel de $\Bt^1(E)^{(2)}$ tel que $e^\alg \subset G$. Soit $a \in E$, $F \subset M$  des sous-espaces de Kolmogorov tels que l'application  
 $$\rho:F \times \alg \to F \times M/F,\ (\a,u) \mapsto (\a,u (a+\a))$$
 soit bien définie et possède un inverse à droite borné
  $$j:F \times M/F \to F \times \alg,\ (\a,b) \mapsto (\a,j(\a,b)).$$
   L'orbite de $a+F$ sous l'action de $G$ est alors égale à $a+M$.
 \end{theoreme}

 \chapter{Espaces de Kolmogorov en g\'eom\'etrie analytique}
 \label{S::analytique}
 \section{Germes le long d'un compact}
 Nous avons vu que l'espace vectoriel des séries convergentes $\CM\{ z\}$ se réalise comme la limite directe de différents espaces
 de Kolmogorov $C^{0,\omega}$, $L^{p,\omega}$. Nous allons généraliser cette construction afin de pouvoir l'appliquer au cas des tores invariants. 
  
 Soit $X$ une variété complexe non nécessairement lisse et $K \subset X$ un sous-ensemble compact.
 Considérons l'espace vectoriel 
 $$\Ot_{X,K}=\underrightarrow{\lim}\, \Ot_X(U) $$
 où $U$ parcourt l'ensemble des ouverts contenant $K$ ordonné par l'inclusion. On muni cet espace de la topologie la plus fine qui rende les applications
 $$\Ot_X(U) \to \Ot_{X,K} $$
   continue.
   
 Un élément de $\Ot_{X,K}$ est une section du faisceau $\Ft$ au voisinage de $K$, pour laquelle on oublie de préciser la taille du voisinage de $K$ sur laquelle  elle est définie.  Prendre la limite directe revient donc à identifier deux sections qui sont égales sur un ouvert contenant $K$~:
 $$f \sim g \iff \exists U \supset K,\ f_{\mid U}=g_{\mid U}.  $$ 
 Dans le cas où $K$ est réduit à un point, on retrouve la notion de germe en un point. Nous parlerons donc, comme nous l'avons fait pour l'énoncé du théorème des tores invariants, de germes de fonctions holomorphes en un compact.  
 
 Choisissons par exemple 
 $$K =\{ z \in  \CM^*:|z|=1 \}$$
  le cercle unité. Un élément de $\Ot_{X,K}$ est une série
  $$\sum_{n \in \ZM} a_nz^n$$ analytique dans une certaine couronne
 $$C_t:=\{ z \in \CM: 1-t < | z | <1+t\}. $$
 L'analyticité de la série s'exprime par la décroissance exponentielle des coefficients et, en posant $z=r e^{2n\pi\sqrt{-1}}$, on voit qu'une telle série est le prolongement analytique de la série de Fourier
 $$f_{\mid K}(\theta)=\sum_{n \in \ZM} a_ne^{n\sqrt{-1}\theta} $$

 \section{Les foncteurs  $C^{\omega}$ et $L^{2,\omega}$}
\label{SS::Banach}

Soit $K=(K_s),\  s \in ]0,S[$ une famille croissante  de compacts  d'un espace analytique $X$.  Les espaces de Banach~:
  $$C^\omega(K)_s:=C^0(K_s,\CM) \cap \Ot_X(\mathring{K_s}) $$
définissent un espace de Kolmogorov pour les normes
  $$| f |_s:=\sup_{z \in K_s} |f(z)| $$
 On a ainsi définit un foncteur de la catégorie des familles croissantes de compacts dans celle des espaces de Kolmogorov
 $$C^\omega:Comp \to EK,\ K \mapsto C^\omega(K).$$ 
Prenons à présent $X=\CM^n$. Nous dirons que $K$ {\em vérifie la propriété de Cauchy} si le polydisque de centre $x$ et de rayon $(t-s)$ est contenu dans $K_t$.
On peut alors généraliser les inégalités de Cauchy dans ce contexte~:
\begin{proposition} Si $K=(K_s),\  s \in ]0,S[$ une famille croissante  de compacts de $\CM^n$ vérifiant la propriété de Cauchy alors tout opérateur différentiel d'ordre $k$ définit une application $k$-bornée de $C^\omega(K)$.
\end{proposition}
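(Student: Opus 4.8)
The plan is to transcribe, in $n$ variables and in the language of Kolmogorov spaces, the one-variable Cauchy-inequality computation made above for the derivation $f\mapsto f'$. Write the operator as $P=\sum_{|\a|\leq k}a_\a\,\partial^\a$ with $\partial^\a=\partial_{z_1}^{\a_1}\cdots\partial_{z_n}^{\a_n}$ and $\a!=\a_1!\cdots\a_n!$, the coefficients $a_\a$ being holomorphic near each $K_s$ and bounded on $\bigcup_{s<S}K_s$ (this boundedness is what it means for $P$ to act on $C^\omega(K)$); set $C_0=\sum_{|\a|\leq k}\a!\,\sup_{\bigcup_{s<S}K_s}|a_\a|<+\infty$.

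First I would check that $P$ really defines a \emph{complete} family of morphisms $C^\omega(K)\to C^\omega(K)$. Fix $s<t\leq S$ and $f\in C^\omega(K)_t$. By the Cauchy property, for every $x\in K_s$ the polydisc of centre $x$ and radius $t-s$ is contained in $K_t$; being a neighbourhood of $x$, this already gives $K_s\subset\mathring{K_t}$, so $f$ is holomorphic near $K_s$, hence $Pf$ is defined and holomorphic there, and its restriction to $K_s$ belongs to $C^0(K_s,\CM)\cap\Ot_X(\mathring{K_s})=C^\omega(K)_s$. These restrictions visibly commute with the restriction maps $C^\omega(K)_t\to C^\omega(K)_{t'}$, so for each ratio $\l=s/t\in\,]0,1[$ they assemble into a morphism of Kolmogorov spaces --- this is the completeness requirement.

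Next comes the estimate. Given $x\in K_s$, the $n$-variable Cauchy integral formula on the polytorus of polyradius $(r,\dots,r)$ around $x$, with $r<t-s$, gives $|\partial^\a f(x)|\leq\a!\,r^{-|\a|}\sup_{|z_i-x_i|=r}|f|\leq\a!\,r^{-|\a|}|f|_t$, since that polytorus lies in $K_t$; in the limit, $|\partial^\a f(x)|\leq\a!\,(t-s)^{-|\a|}|f|_t$. Summing over $|\a|\leq k$ and using $(t-s)^{-|\a|}\leq\max(1,S)^{k}(t-s)^{-k}$ (valid because $0<t-s<S$ and $k-|\a|\geq0$), then taking the supremum over $x\in K_s$, I obtain $|Pf|_s\leq\frac{Ce^2}{(t-s)^k}|f|_t$ with $C=e^{-2}\max(1,S)^{k}C_0$, which is exactly the $k$-boundedness inequality of Definition~\ref{D::borne}.

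I do not expect any real obstacle. The only points calling for a little care are that $Pf$ must extend continuously up to the boundary of $K_s$ --- which is precisely why one works with $K_s$ sitting strictly inside $K_t$ and invokes the Cauchy property --- and the elementary bookkeeping that collapses the mixed powers $(t-s)^{-|\a|}$, $|\a|\leq k$, into a single $(t-s)^{-k}$ while absorbing the coefficient supremum and the factor $e^2$ of Definition~\ref{D::borne} into one constant. In the normalisation $S\leq1$ used elsewhere in the text the factor $\max(1,S)^{k}$ is simply $1$, and the case $k=0$ recovers the fact that multiplication by a bounded holomorphic function is a $0$-bounded morphism.
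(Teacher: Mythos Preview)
Your proof is correct and follows exactly the approach the paper intends: the paper's own ``proof'' is the single sentence \emph{``La démonstration est analogue à celle du cas $n=1$, $K=\{0\}$''}, i.e.\ redo the Cauchy-formula estimate in several variables, which is precisely what you carry out. Your write-up is in fact more complete than the paper's, since you make explicit the completeness of the family, the handling of the coefficients $a_\a$, and the bookkeeping that reduces the mixed powers $(t-s)^{-|\a|}$ to $(t-s)^{-k}$.
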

La démonstration est analogue à celle du cas $n=1$, $K=\{0 \}$.
   

Supposons $X$  munit d'une forme volume $dV$ et $K=(K_s), \ s \in ]0,S[$ une famille  strictement croissante de compacts de Stein de $X$. 
Les espaces de Banach
$$ L^{p,\omega}(K)_s:=L^p(K_s,\CM) \cap \Ot_X(K_s)$$
définissent des espaces de Kolmogorov pour la norme
$$| f |_s:= \int_{K_s} |f(z)|^2p dV.  $$
On a ainsi définit des nouveaux foncteurs de la catégorie des familles croissante de compacts dans celle des espaces de Kolmogorov
 $$L^{p,\omega}:Comp \to EK,\ K \mapsto L^{p,\omega}(K).$$
 
 \section{Homotopie de foncteurs à valeur dans $EK$}
 Comme toute fonction continue est intégrable on a un diagramme commutatif
$$\xymatrix{Comp \ar[r]^{C^\omega} \ar[rd]_{L^{2,\omega}}& EK \ar[d] \\
& EK}$$
Inversement si $f \in L^{2,\omega}_t$ alors $f$ est holomorphe à l'intérieur de $K_t$ donc continue sur $K_s \subset K_t$ pour tout $s<t$.
On a ainsi une famille complète de morphismes 
$$J: L^{2,\omega}(K)_t \to C^\omega(K)_s.$$
Lorsque la limite directe de $C^\omega(K)$ et $L^{2,\omega}(K)$ est l'espace des germes le long des $\cap_s K_s$, l'application $J$ devient
l'identité sur la limite directe

\begin{proposition} Si $K$ est une famille de compacts de $\CM^n$ muni de la forme volume 
$$dV=2(2i\pi)^{-n}\prod_{i=1}^nd\bar z_i \w d z_i$$ qui vérifient la propriété de Cauchy alors $J$ est un morphisme $1$-borné de norme au plus $1$.
\end{proposition}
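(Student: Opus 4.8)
The map $J$ sends $f\in L^{2,\omega}(K)_t$ to its restriction to $K_s$; it is well defined because an element of $L^{2,\omega}(K)_t$ is holomorphic, hence continuous, on $K_s$. To prove that $J$ is $1$-bounded of norm at most $1$ I must estimate $|J(f)|_s=\sup_{z\in K_s}|f(z)|$ by $(t-s)^{-1}|f|_t$, up to the $e^2$ margin of Definition~\ref{D::borne}, where $|f|_t=\bigl(\int_{K_t}|f|^2\,dV\bigr)^{1/2}$. The plan is to bound the pointwise value $|f(z)|$ by the $L^2(dV)$-mean of $f$ over a polydisc sitting inside $K_t$, and then to pass from that mean to $|f|_t$ by Cauchy--Schwarz; this is exactly the argument used for $n=1$ in the key example, the only new feature being that the averaging measure is now a product.

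First I would fix $s<t\le S$, $f\in L^{2,\omega}(K)_t$ and a point $z\in K_s$. By the Cauchy property, for every $\rho\le t-s$ the polydisc $P(z,\rho)$ of centre $z$ and polyradius $(\rho,\dots,\rho)$ is contained in $K_t$; since the family $(K_s)$ is strictly increasing, $z$ is an interior point of $K_t$, so $P(z,\rho)\subset\mathring K_t$ for $\rho<t-s$ and $f$ is holomorphic there. Writing the Taylor expansion $f(z+h)=\sum_\alpha a_\alpha h^\alpha$ and using that $dV$ is rotation invariant in each coordinate, the monomials $h^\alpha$ are pairwise orthogonal in $L^2(P(z,\rho),dV)$, so
\[
\int_{P(z,\rho)}|f|^2\,dV \;=\;\sum_\alpha|a_\alpha|^2\int_{P(z,\rho)}|h^\alpha|^2\,dV\;\ge\;|a_0|^2\,\mathrm{Vol}_{dV}\!\bigl(P(z,\rho)\bigr)\;=\;|f(z)|^2\,\mathrm{Vol}_{dV}\!\bigl(P(z,\rho)\bigr).
\]
(Equivalently, $|f|^2$ is plurisubharmonic, which gives the same sub-mean-value inequality directly.) Since $P(z,\rho)\subset K_t$, the left-hand side is at most $|f|_t^2$, whence
\[
|f(z)|\;\le\;\mathrm{Vol}_{dV}\!\bigl(P(z,\rho)\bigr)^{-1/2}\,|f|_t .
\]

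It then remains to let $\rho\uparrow t-s$ and to insert the value of $\mathrm{Vol}_{dV}(P(z,\rho))$: since $2(2i\pi)^{-1}\,d\bar z_i\wedge dz_i$ is, up to the normalising constant, Lebesgue measure on the $i$-th copy of $\CM$, this volume is an explicit power of $\rho$ times a geometric constant, and the factor $2(2i\pi)^{-n}$ in $dV$ has been chosen precisely so that the resulting bound is $|f(z)|\le(t-s)^{-1}|f|_t$ once the constant is absorbed into the $e^2$ margin of Definition~\ref{D::borne}. Taking the supremum over $z\in K_s$ gives $|J(f)|_s\le(t-s)^{-1}|f|_t$, i.e. $J$ is a $1$-bounded morphism of norm at most $1$.

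\textbf{Main obstacle.} There is no deep difficulty here: the Cauchy hypothesis supplies the polydisc, the orthogonality of monomials (or plurisubharmonicity of $|f|^2$) supplies the mean-value inequality, and the rest is Cauchy--Schwarz. The two points that genuinely need care are the boundary regularity — $f$ is holomorphic only on $\mathring K_t$, so one must work with $\rho<t-s$ and pass to the limit, having first used the strict monotonicity of $(K_s)$ to know that $z\in\mathring K_t$ — and the bookkeeping of the normalisation of $dV$, so that the polydisc volume yields exactly the constant claimed; the latter is the one place I would recheck carefully against the definition of $|\cdot|_s$ for the family $L^{2,\omega}$.
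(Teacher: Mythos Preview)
Your argument is essentially the paper's: fix $w\in K_s$, use the Cauchy property to place the polydisc $\Gamma_w$ of radius $\sigma=t-s$ inside $K_t$, expand $f$ in Taylor series at $w$, use orthogonality of the monomials for $dV$ to obtain $\int_{\Gamma_w}|f|^2\,dV\ge |f(w)|^2\,\mathrm{Vol}_{dV}(\Gamma_w)$, and bound the left side by $|f|_t^2$. The paper writes this directly with $\sigma=t-s$ and does not go through the limiting step $\rho\uparrow t-s$ or invoke plurisubharmonicity; those refinements are yours.

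Your hesitation about the bookkeeping of the constant is well founded. The paper's displayed identity $\int_{\Gamma_w}|f|^2\,dV=\sum_j|a_j|^2\,\sigma^{2j+2}/(j+1)$ is the one-variable computation; in $\CM^n$ with the given $dV$ the constant term is $|a_0|^2$ times $\mathrm{Vol}_{dV}(\Gamma_w)$, which scales like $\sigma^{2n}$, so the inequality one actually gets is $|f(w)|\le C\,(t-s)^{-n}|f|_t$. The argument therefore yields an $n$-bounded morphism in general and matches the stated $1$-bounded conclusion only when $n=1$; the paper's own proof shares this feature.
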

\begin{proof}
Prenons $s<t$ et   $f \in L^{2,\omega}(K)_t$. La série de Taylor de $f$ au point   $w \in K_s$   donne~:
$$f(z)=\sum_{j \geq 0} a_j (z-w)^j,\ a_j \in \CM. $$
Comme $K$ vérifie la propriété de Cauchy, le polydisque   $\G_w$ centré en $w$ de rayon $\s=t-s$ est contenu dans $K_t$. Par ailleurs
$$ \int_{\G_w} | f(z)|^2 dV=\sum_{j \geq 0} |a_j|^2 \frac{\s^{2j+2}}{j+1} =|a_0|^2 \s^{2} +\dots $$
et comme $\G_w \subset K_t$, on en déduit que~:
$$ \int_{\G_w} | f(z)|^2 dV \leq   \int_{K_t} | f(z)|^2 dV=| f |_t^2  $$
Ceci montre que
$$ |f(w) |=|a_0| \leq  \frac{1}{\s}\left( \int_{\G_w} | f(z)|^2 dV \right)^{1/2} \leq  \s^{-1} | f |_{s+\s}$$
pour tout $w \in K_s$ et démontre la proposition.
 \end{proof}
 Nous avons donc deux foncteurs $C^\omega$ et $L^{2,\omega}$ avec des applications bornées entre eux. Nous dirons alors que ces deux foncteurs sont {\em homotopes}.
 
 Notons que le cas analytique réel ne pose pas de difficulté: si la variété $X $ munie d'une involution anti-holomorphe 
$$\tau:X \to X $$
et $K$ une famille de compacts contenu dans $X_{\RM }$. Une fonction $f:X \to \CM$ est dite réelle si
$$f(\tau z)=\overline{f(z)} .$$
Les sous-espaces de fonctions réelles de $C^\omega(K)$ et $L^{2,\omega}(K)$ définissent des sous-espaces de Kolmogorov.  

 \section{S\'eries de Fourier}
 Revenons aux fonctions analytique définit sur un voisinage du cercle unité dans $X=\CM^*$. Notons $K$ la famille croissante de voisinages du cercle unité
$$K_s=\{ z \in \CM^*: 1-s \leq | z| \leq 1+s \} $$
et munissons $X$ de la forme volume
$$dV=\frac{1}{4i \pi} d\bar z \w dz. $$
En posant $z=re^{i\theta}$, la condition $z \in K_s$ devient
$$ 1-s \leq r \leq 1+s.$$
Le produit hermitien 
$$\< z^n | z^m \> =\frac{1}{4i \pi}\int_{K_s}z^n \bar z^m d\bar z \w dz=\frac{1}{2\pi}\int_{r=1-s}^{r=1+s}\int_{\theta=0}^{\theta=2\pi}r^{n+m+1}e^{i(n-m)\theta}d\theta dr $$
est nul pour $n \neq m$. 
Par conséquent, ces fonctions sont orthogonales et
\begin{align*}\< z^n | z^n \> &= \int_{r=1-s}^{r=1+s} r^{2n+1} dr \\
&=\frac{1}{2n+2}\left((1+s)^{2n+2}-(1-s)^{2n+2}\right).
\end{align*}
pour $n\neq -1$.
Par ailleurs
 $$ (1+s)^{2n+2}-(1-s)^{2n+2} \sim \left\{ \begin{matrix}
  (1+s)^{2n+2}&pour& n \to +\infty \\
  -(1-s)^{2n+2}&pour & n \to -\infty
  \end{matrix}\right. $$
 Les fonctions de $L^{2,\omega}(K)_s$ s'écrivent donc comme des séries de Fourier
$$f(z)= \sum_{n \in \ZM} a_n z^n$$
telles que
$$\sum_{n >0} \frac{|a_n|^2}{2n+2}(1+s)^{2n+2}  <+\infty$$
et
$$\sum_{n < 1} \frac{|a_n|^2}{|2n+2|}(1-s)^{2n+2}  <+\infty.$$
On retrouve bien la correspondance usuelle entre analyticité et décroissance exponentielle des coefficients de Fourier.  En effet, pour $n \to +\infty$, on a~:
$$(1+s)^{n+1}=e^{(n+1)\log (1+s)} \sim e^{n s}. $$
Le terme général de la série $$\sum_{n >0} \frac{|a_n|^2}{2n+2}(1+s)^{2n+2}  <+\infty$$
tend vers zéro donc
$$a_n=o(n e^{-n s}) .$$
De même, lorsque $n \to -\infty$, les coefficients de la série sont à décroissance exponentielle.
 
   \chapter{Le th\'eor\`eme KAM singulier}
   Nous allons commencer par une version singulière du théorème KAM. La démonstration est essentiellement la même que celle du théorème
   des tores invariants, mais l'absence de paramètre perturbatif permet de simplifier les notations.
      \section{\'Enoncé du th\'eor\`eme}
   Considérons l'algèbre locale 
   $$\CM\{ q,p \}:=\CM\{ q_1,\dots,q_n,p_1,\dots,p_n \}$$
   des séries convergentes dans les variables $q_i,p_i$
   munie de sa structure symplectique usuelle. Nous notons $\Mt$ l'idéal maximal des séries qui s'annulent à l'origine.
   \begin{theoreme}  Soit
 $$H=\sum_{i=1}^n \omega_i p_iq_i$$
 et $I \subset \CM\{ q,p \} $ l'idéal engendré par les $p_iq_i$.
 Si le vecteur $\omega=(\omega_1,\dots,\omega_n)$ est diophantien alors pour tout $R \in \Mt^3$, il existe un automorphisme symplectique
   $\p \in \Aut(\CM\{ q,p \})$
   tel que 
   $$\p(H+R)=H\ \mod\ \left( I^2 \oplus \CM \right). $$
    En particulier, $H+R$ possède un idéal  invariant isomorphe à $I$.
  \end{theoreme}
  En particulier, tout réprésentant de $H+R$ admet une variété lagrangienne complexe invariante isomorphe à une configuration de $2^n$ plan.

\section{Conditions diophantiennes et op\'erateurs born\'es}
Nous utilisons des notations multi-indicielles
$$z^I=z_1^{I_1}z_2^{I_2}\cdots z_d^{I_d} $$
et $| I|=I_1+\cdots+I_d$. Introduisons le {\em produit de Hadamard} de deux séries~:
 $$\sum_{I \in \NM^d} a_I z^I \star \sum_{I \in \NM^d} b_I z^I :=\sum_{I \in \NM^d} a_Ib_I z^I.  $$ 
 Ce produit intervient naturellement dans notre problème. En effet, si 
 $$H=\sum_{i=1}^n \omega_i p_iq_i$$ alors pour tout $F \in \Mt$, on a 
 $$\{ H,F \}=g \star F $$
 avec 
 $$g :=\sum_{I \in \NM^n,J \in \NM^n }^n (\omega,I)q^Ip^J.$$ 
 Or  $t\{H,F\}$ est l'action infinitésimale du flot de $F$ au temps $t$ sur $H$~:
 $$e^{t\{H,F\}}=H+t\{H,F\}+o(t). $$
 Afin de pouvoir inverser l'action infinitésimale et appliquer le théorème de Kolmogorov abstrait, nous devons inverser le produit de Hadamard par $g$.

 \begin{proposition}
 \label{P::Hadamard}
 L'application
 $$f\star:\CM[[z]] \to \CM[[z]],\ g \mapsto f \star g $$
 envoie $\CM\{ z \}$ dans lui-même si et seulement si $f$ est une série analytique.
 \end{proposition}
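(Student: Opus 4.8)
The plan is to treat the two implications of the equivalence separately. The ``if'' direction is the (multivariable) Hadamard multiplication theorem and follows from a one-line coefficient estimate; the ``only if'' direction will be proved by contraposition, by manufacturing from a non-analytic $f$ an analytic $g$ whose Hadamard product with $f$ diverges.

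First I would record the standard dictionary: a formal series $h=\sum_I c_I z^I$ lies in $\CM\{ z \}$ if and only if there exist constants $C,R>0$ with $|c_I|\le C R^{-|I|}$ for all $I$ (convergence on the polydisc $\{\,|z_j|<R\ \forall j\,\}$ being equivalent to exponential decay of the coefficients). Granting this, suppose $f=\sum_I a_I z^I$ is analytic, say $|a_I|\le C R^{-|I|}$, and let $g=\sum_I b_I z^I\in\CM\{ z \}$, say $|b_I|\le C' (R')^{-|I|}$. Then $|a_I b_I|\le C C' (R R')^{-|I|}$, so $f\star g=\sum_I a_I b_I z^I$ converges on the polydisc of radius $R R'$ and therefore belongs to $\CM\{ z \}$; hence $f\star$ preserves $\CM\{ z \}$.

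For the converse I argue contrapositively: assume $f=\sum_I a_I z^I$ is \emph{not} analytic and build $g\in\CM\{ z \}$ with $f\star g\notin\CM\{ z \}$. By the dictionary, the failure of the coefficient bound for every choice of $C,R$ forces $\sup_{|I|\ge 1}|a_I|^{1/|I|}=+\infty$, so one can pick pairwise distinct multi-indices $I_1,I_2,\dots$, each with $|I_k|\ge 1$, such that $|a_{I_k}|^{1/|I_k|}\to+\infty$. Take $g=\sum_{k\ge 1} z^{I_k}$, i.e. $b_I=1$ if $I$ is one of the $I_k$ and $b_I=0$ otherwise. Then $g$ is analytic: for any polyradius $\rho$ with $r:=\max_j \rho_j<1$ one has $\sum_{k\ge 1}\rho^{I_k}\le\sum_{\text{all }I} r^{|I|}=\prod_{j}(1-r)^{-1}<\infty$, so $g$ converges on the unit polydisc. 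On the other hand $f\star g=\sum_{k\ge 1} a_{I_k} z^{I_k}$, and evaluating its general term at $z=(\varepsilon,\dots,\varepsilon)$ gives $a_{I_k}\varepsilon^{|I_k|}$, of modulus $\bigl(|a_{I_k}|^{1/|I_k|}\varepsilon\bigr)^{|I_k|}\to+\infty$ for every fixed $\varepsilon>0$. Thus the general term of $f\star g$ does not tend to $0$ at any point $(\varepsilon,\dots,\varepsilon)$ with $\varepsilon>0$, so $f\star g$ diverges on every neighbourhood of the origin and is not a convergent series — contradicting the hypothesis. This proves the proposition.

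The argument is essentially elementary; the only point requiring care — the ``hard part'' in an otherwise routine proof — is to make the counterexample robust in $n$ variables. Choosing the $I_k$ merely pairwise distinct already forces convergence of $g$, via the crude comparison $\sum_{\text{all }I} r^{|I|}<\infty$, while testing $f\star g$ along the diagonal $z_1=\dots=z_n=\varepsilon$ is what turns the coefficient growth $|a_{I_k}|^{1/|I_k|}\to+\infty$ into genuine divergence on a full neighbourhood of the origin rather than on a thin subset. Once these two observations are in place, no further estimates are needed.
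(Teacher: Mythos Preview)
Your argument is correct. For the sufficiency direction, your coefficient estimate $|a_I b_I|\le CC'(RR')^{-|I|}$ is essentially the paper's use of the Hadamard radius formula, just phrased slightly differently.

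For the necessity direction, however, you work much harder than the paper does. The paper's proof is one line: take $g=\sum_{I} z^I$ (the geometric series, convergent on the unit polydisc, hence in $\CM\{z\}$); then $f\star g=f$, so if $f\star$ preserves $\CM\{z\}$ we get $f\in\CM\{z\}$ immediately. Your contrapositive construction --- extracting multi-indices $I_k$ with $|a_{I_k}|^{1/|I_k|}\to\infty$, assembling a sparse $g$, and testing divergence along the diagonal --- is valid but unnecessary: the single universal test function $\sum_I z^I$ recovers $f$ exactly and settles the question for every $f$ at once. Your approach does illustrate more explicitly \emph{how} the Hadamard product can blow up, which is pedagogically useful, but the paper's trick is the efficient one to remember.
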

 \begin{proof}
 Nécessité.\\ On doit avoir
 $$f \star \sum_{I \in \NM^n}  z^I=f \in \CM\{ z \}.$$
 Suffisance.\\
Le critère d'Hadamard dit que le rayon de convergence $R$ d'une série d'une variable complexe $\sum a_n x^n$ est donné par
$$R=\limsup |a_n|^{-1/| n |} .$$
 L'inégalité
 $$\limsup |a_I|^{-1/| I |}|b_I|^{-1/| I |}  \leq \limsup |a_I|^{-1/| I |}  \limsup |b_I|^{-1/| I |} <+\infty $$
 montre que si $f$ et $g$ sont analytiques dans un polydisque de polyrayon $r$ alors $f\star g$ est analytique dans celui de polyrayon 
 $r^2$.
 \end{proof}
  Considérons la famille de polydisques compacts $K=(K_s)$ avec
 $$K_s=\{ z \in \CM^n: | z_i | \leq s,\ i=1,\dots,2n \} . $$  
 La proposition suivante établit un lien entre condition diophantienne et opérateurs bornés~:
  \begin{proposition}
 \label{P::Diophante}
 Pour tout fonction holomorphe 
 $$f=\sum_{I \in \NM^n} a_I z^I,\ z=(z_1,\dots,z_n)$$
 les conditions suivantes sont équivalentes
 \begin{enumerate}[{\rm i)}]
 \item il existe $k$ tel que $a_I=O(| I |^k)$ ;
 \item l'application $f\star : L^{2,\omega}(K) \to L^{2,\omega}(K),\ g \mapsto f \star g $ est $k$-bornée.
 \end{enumerate}
 \end{proposition}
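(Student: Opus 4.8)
The plan is to \emph{diagonalise} the problem in the monomial basis. On the polydisc $K_s=\{z\in\CM^n:|z_i|\le s\}$ the monomials $z^I$, $I\in\NM^n$, are pairwise orthogonal in $L^2(K_s,dV)$, and a one-line computation in polar coordinates gives $|z^I|_s^2=c\,\frac{s^{2|I|+2n}}{\prod_{j=1}^n(I_j+1)}$ for a constant $c$ independent of $I$ and $s$. Hence $g=\sum_I b_I z^I$ lies in $L^{2,\omega}(K)_s$ exactly when $\sum_I|b_I|^2\frac{s^{2|I|+2n}}{\prod_j(I_j+1)}<+\infty$, and the Hadamard operator acts diagonally, $f\star z^I=a_I z^I$; it is a well-defined endomorphism of $\CM\{z\}$ by Proposition~\ref{P::Hadamard}. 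Writing the Parseval expansions of $|f\star g|_s^2$ and $|g|_t^2$ for $0<s<t\le S$ and comparing coefficient by coefficient, the $k$-boundedness of $f\star$ --- an estimate $|f\star g|_s\le Ce^2(t-s)^{-k}|g|_t$ for all $s<t$ --- is equivalent to the single scalar inequality
$$\sup_{I\in\NM^n}|a_I|\,(s/t)^{|I|+n}\;\le\;\frac{Ce^2}{(t-s)^{k}},\qquad 0<s<t\le S,$$
for a suitable $C>0$.

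With this reformulation both implications are short. For (i) $\Rightarrow$ (ii), assume $|a_I|\le M(|I|+1)^k$. Setting $\theta=s/t$ and $m=|I|$, the left-hand side above is at most $M\sup_{m\ge0}(m+1)^k\theta^m$, so the whole matter reduces to the elementary bound
$$\sup_{m\ge0}(m+1)^k\theta^m\;\le\;\frac{A_k}{(1-\theta)^{k}},\qquad\theta\in(0,1),$$
with $A_k$ depending only on $k$: for $m\ge1$ this follows from the inequality $y^ke^{-y}\le k^ke^{-k}$ applied to $y=m\log(1/\theta)$, together with $\log(1/\theta)\ge1-\theta$, and the term $m=0$ is trivial. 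Since $t-s=t(1-\theta)$ and $t\le S$, this yields the required estimate with a constant of the shape $C=MA_kS^k/e^2$.

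For (ii) $\Rightarrow$ (i), I would feed $g=z^I$ into the inequality above, obtaining $|a_I|\le Ce^2(t-s)^{-k}(t/s)^{|I|+n}$ for every $s<t\le S$, then fix $t=S$ and optimise over $s=S\theta$. This amounts to maximising $(1-\theta)^k\theta^{m}$ with $m=|I|+n$, whose maximum over $\theta\in(0,1)$ is attained at $\theta=m/(m+k)$ and equals $\frac{k^k m^m}{(m+k)^{m+k}}$; since $(1-k/(m+k))^m\ge e^{-k}$, this maximum is at least a positive constant (depending only on $k$) times $(|I|+1)^{-k}$, and substituting back gives $|a_I|=O((|I|+1)^k)$, that is $a_I=O(|I|^k)$.

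The only genuine work is the pair of one-variable extremal estimates --- that $\sup_{m\ge0}(m+1)^k\theta^m$ is of exact order $(1-\theta)^{-k}$ and that $\max_{\theta\in(0,1)}(1-\theta)^k\theta^m$ is of exact order $m^{-k}$ --- and the point to watch is that the powers of $(t-s)$ match on the nose, so that one obtains $k$-boundedness and not merely $(k+1)$-boundedness. Everything else --- orthogonality of the monomials, the diagonal form of $f\star$, and Parseval --- is built into the definition of the Kolmogorov space $L^{2,\omega}(K)$.
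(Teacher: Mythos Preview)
Your argument is correct and actually sharper than the paper's. You work entirely inside $L^{2,\omega}(K)$: orthogonality of the monomials turns $f\star$ into a diagonal operator, Parseval reduces $k$-boundedness to the scalar inequality $\sup_I|a_I|(s/t)^{|I|+n}\le Ce^2(t-s)^{-k}$, and both directions become the one-variable extremal estimates you wrote down. The optimisations are clean and the exponents match on the nose, so you obtain genuine $k$-boundedness.

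The paper takes a different route. For (i) $\Rightarrow$ (ii) it introduces the Euler operator $D=\sum_i z_i\partial_{z_i}$, observes that $|f\star g|_s\le C\,|D^k g|_s$ when $|a_I|=O(|I|^k)$, and then bounds $D^k$ via the Cauchy inequalities in $C^\omega(K)$ together with the homotopy between $C^\omega$ and $L^{2,\omega}$ established in Chapter~\ref{S::analytique}; this costs one extra unit, so the paper only gets $(k+1)$-boundedness. For the converse the paper argues by contraposition, again comparing with $D^k$. Your approach is more elementary and self-contained (no Cauchy inequalities, no functor comparison) and keeps the exact exponent; the paper's approach, at the price of a looser constant, makes explicit the point that the Hadamard operator behaves like a differential operator of order $k$, which is the conceptual message of the section.
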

\begin{proof}
Commençons par i) $\implies$ ii).\\
Soit 
$$g=\sum_{I \in \NM^n} b_I z^I. $$
Considérons l'opérateur différentiel
$$D:L^{2,\omega}(K) \to L^{2,\omega}(K), P \mapsto \sum_{i=1}^n z_i\d_{z_i}.  $$
On a 
$$ \sum_{I \in \NM^n}|I|^k | b_I|^2 s^{2|I|}=| D^k g|_s,\ |I|=\sum_{j=1}^n I_j.$$
D'après les inégalités de Cauchy,  tout opérateur différentiel d'ordre $k$ est $k$-borné dans $C^\omega(K)$. Comme les foncteurs $L^{2,\omega}$ et $C^\omega$ sont homotopes, $D^k$ est un opérateur $k+1$ borné $L^{2,\omega}(K)$.

Nous savons que
$$| z^I|_s=\frac{s^{|I|+1}}{\sqrt{2k+2}}. $$
donc il existe une constante $C$ telle que
$$| f \star g|_s =\sum_{I \in \NM^n} | a_Ib_I|^2 \frac{s^{|I|+1}}{\sqrt{2|I|+2}}\leq C\sum_{I \in \NM^n}|I|^k | b_I|^2 \frac{s^{|I|+1}}{\sqrt{2|I|+2}}=C| D^k g|_s. $$
Ce qui montre  i) $\implies$ ii).\\

\noindent $\neg\, {\rm i)} \implies \neg\, {\rm ii)}$.\\
Supposons que pour tout $k$, il existe $C_k$ tel que
$$| a_I| \geq C_k | I |^k .  $$
On a 
$$| f \star g|_s^2 =\sum_{I \in \NM^n} | a_Ib_I|^2 s^{2|I|}\geq C_k^2\sum_{I \in \NM^n}|I|^k | b_I|^2 s^{2|I|}=C_k^2| D^k g|_s. $$
Or $D^k$ ne peut-être $(k-2)$ borné dans $L^{2,\omega}(K)$ sinon il serait $(k-1)$ borné dans $C^\omega(K)$ donc $f\star$ ne peut être $(k-1)$-borné.
Comme ceci est vrai pour tout $k$, cela achève la démonstration de la proposition.
 \end{proof}

 \section{D\'emonstration du th\'eor\`eme}
 L'application   du théorème de Kolmogorov abstrait se résume maintenant à un problème d'algèbre linéaire. 
 
Appliquons le théorème avec
$$E \subset C^\omega(K),\ M= C^\omega(K)^{(3)},\ F =I^2 \oplus \CM,$$ 
$\alg$ l'espace des dérivations hamiltoniennes d'ordre $2$~:
 $$\alg:=\left\{ \{ -, h(q,p) \}: h \in \Mt^3 \right\}. $$
 et $G$ les automorphismes symplectiques de $E$.
 
 Nous prenons $a=H$. L'action infinitésimale $\rho(\a)$ est alors donnée par
$$\alg \to M/F,\ \{- ,h \} \mapsto   \{ H+\a, h(t,q,p) \}$$
avec $\a \in I^2$.
Notons $V \subset C^\omega(K)$ le sous-espace vectoriel dont les séries de Taylor sont la forme 
$$\sum_{J} \a_{J}  q^J+\sum_{J} \b_{J} q^J $$
On a les identifications~:
 $$M/F \approx M/(I \oplus \CM) \oplus I/I^2 \approx V \oplus \bigoplus_{i=1}^n V p_iq_i . $$

 Décomposons l'opérateur $\{ H+\a,-\}$ dans chacun des facteurs $V$ et $Vp_i q_i$.
Pour $\a \in I^2$, on a 
$$\{ H+\a,h(q,t)\}=f\star h(q,t) +\{ \a,h \}  $$
et
$$ \{ H+\a, p_iq_i h(q,t) \}=p_iq_i f \star h.$$
L'opérateur $\{ H,-\}$ se décompose en bloc sous la forme
$$\begin{pmatrix} f \star & 0 \\
\{ \a ,- \}& f\star
\end{pmatrix}
$$
Posons
$$g=\sum_{I \in \NM^n \setminus \{ 0\} } \frac{1}{(\omega,I)} q^I.$$
L'inverse de $\{H,-\}$ est donné par
$$
\begin{pmatrix} g \star & 0 \\
-g \star \{ \a ,- \} g \star& g\star
\end{pmatrix}
$$
   Chacune des composantes de cette matrice est un opérateur borné, nous avons ainsi trouvé un inverse borné.  D'après le théorème de Kolmogorov, pour tout $R \in \Ot_{\CM^{2n},0}$, il existe un automorphisme symplectique  tel que
  $$\p(H+R)   =H \ \mod \left(\CM \oplus I^2 \right). $$
  Ceci achève la démonstration du théorème.

  \chapter{Le th\'eor\`eme des tores invariants}
  \section{Tore r\'eel et tore complexe}
  Le théorème des tores d'invariants se démontre de façon analogue au cas singulier, nous devons simplement ajouter un paramètre perturbatif, prendre en compte la structure réelle et remplacer les germes en l'origine par des germes le long d'un tore.

  La variété
 $$ X=(\CM^*)^n \times \CM^n \times \CM.$$
 s'identifie au produit fibré cotangent au tore complexe $(\CM^*)^n$ par une droite complexe. Notons $q_1,\dots,q_n$ les coordonnées
 dans $(\CM^*)^n$, $p_1,\dots,p_n$ celle de $\CM^n$ et $t \in \CM$ le paramètre perturbatif que nous noterons parfois $p_{n+1}$.
 
 La deux forme
 $$\omega:=\frac{1}{\sqrt{-1}}\sum_{i=1}^n \frac{dq_i}{ q_i} \w dp_i $$
 définit une forme symplectique sur les fibres de la projection
 $$X \to \CM,\ (q,p,t) \mapsto t $$
 et, par conséquent, un crochet de Poisson $\CM\{ t\}$-linéaire.
 
 Les seuls crochets de Poisson non nuls sur $X$ entre formes linéaires sont donc
 $$\{ q_i,p_i \}=\sqrt{-1}q_i. $$
 
 La structure réelle sur $X$ est donnée par l'involution antiholomorphe
 $$ q_i \mapsto \frac{1}{\bar q_i},\ p_i \mapsto \bar p_i, t \mapsto \bar t.$$
 La partie réelle de $X$ est
 $$ X(\RM)=(S^1)^n \times \RM^n \times \RM.$$
  On posant $q_i=e^{\sqrt{-1}\p_i}$ pour $q \in (S^1)^n$, on obtient
  $$\omega= \sum_{i=1}^n  d \p_i \w dp_i , $$
 ce qui correspond bien à la forme symplectique du fibré cotangent au tore.
  
   La famille de compacts $K$
 $$K_s=\{ (q,p,t) \in X: 1-s \leq | q_i| \leq 1+s,\ | p_i | \leq s,\ | t | \leq s^2 \}  $$
 définit un voisinage de $X(\RM)$ dans $X$.

 \section{D\'emonstration du th\'eor\`eme des tores invariants}
 Appliquons  le théorème de Kolmogorov abstrait avec $E \subset C^\omega(K) $ le sous-espace des fonctions réelles, $F \subset E $ la somme du carré de l'idéal engendré par les $p_1,\dots,p_n$ avec
 les fonctions de $t$ et $M \subset E^{(1)}$ les fonctions qui s'annulent en $t=0$~:
 $$M=t\,E.$$
 Nous prenons $\alg \subset \Bt^1(E)^{(2)}$ égal à l'espace des dérivations hamiltoniennes qui s'annulent en $t=0$~:
 $$\alg:=\left\{ \{ -, h(t,q,p) \} +\sum_i a_i(t)\d_{p_i}: a_i(0)=0, h(0,-)=0. \right\}. $$
L'exponentielle d'un élément de  $\alg$ est un symplectomorphisme dépendant de $t$.

Calculons maintenant l'application $\rho$.  La deuxième composante de $\rho$ s'identifie donc à l'application
$$\alg \to M/F,\ \{- ,h \}+\sum_i a_i(t)\d_{p_i} \mapsto   \{ H, h(t,q,p) \} +\sum_i a_i(t)\d_{p_i}H$$
 Notons $M_{q,t}$ (resp. $M_t$) le sous-espace des fonctions de $M$ ne dépendant que de $q,t$ (resp. de $t$). 
 
 Considérons l'application qui a une fonction associe sa valeur moyenne
$$M_{q,t} \to M_t,\ \sum_{I \in \ZM^n} a_i(t)q^i \mapsto a_0  $$

   L'espace vectoriel $M/F$ s'identifie à
 $$ K_{q,t}/ \oplus I/I^2. $$
 et
$$  I/I^2 \approx \bigoplus_{i=1}^n p_i M_{q,t} . $$
Décomposons l'opérateur $\{ H+\a,-\}$ dans chacun des facteurs $M_{q,t}$ et $p_i M_{q,t}$.
Pour $\a \in I^2$, on a 
\begin{align*}
\{ H+\a,h(q,t)\}&=f\star h(q,t) +\{ \a,g \} ;\\
   \{ H+\a, p_i h(q,t) \}&=p_i f \star h(q,t)+\sqrt{-1}q_i (h(q,t)\d_{q_i}H+\d_{q_i} \a)+p_i\{ \a,h(q,t)\}=p_i f \star h(q,t).
\end{align*} 
Par ailleurs~:
$$\d_{q_i} H=0, \d_{q_i} \a \in I^2, \{ \a,h\} \in I$$
 donc on a~:
   $$ \{ H+\a, p_i h(q,t) \} =p_i f \star h(q,t)\ \mod I^2.$$
  
 Donc en utilisant la décomposition
 $$K_{q,t} \oplus \left( \bigoplus_{i=1}^n p_i M_{q,t} \right)  ,  $$
 l'opérateur $\{ -,H \}$ est de la forme
 $$\begin{pmatrix} f \star & 0   \\
\{ \a ,- \}& f\star \\
\end{pmatrix}$$
 
Le produit de Hadamard
$$g=\sum_{I \in \NM^n \setminus \{ 0\} } \frac{1}{(\omega,I)} q^I.$$
est un inverse de $f \star$ sur $K_{q,t}$. Pour trouver un inverse à droite sur la deuxième composante, nous utilisons l'isomorphisme
$$ \bigoplus_{i=1}^n p_i M_{q,t} \approx  \bigoplus_{i=1}^n p_i K_{q,t} \oplus  \bigoplus_{i=1}^n p_i M_{t}.$$
  Sur la première composante, le produit de Hadamard avec $g$ donne l'inverse à droite alors que sur la deuxième, nous utilisons l'hypothèse de non-dégénérescence. Celle-ci implique en effet  que l'application
 $$A:  \sum_{i=1}^na_i(t) \d_{p_i} \mapsto \sum_{i=1}^na_i(t) \overline{\d_{p_i} (H+\a)} $$
 est inversible car
 $$\sum_{i=1}^na_i(t) \overline{\d_{p_i} (H+\a)}=\begin{pmatrix} a_1, \dots ,a_n \end{pmatrix} Hess(H+\a) \begin{pmatrix} p_1\\ \dots \\ p_n \end{pmatrix} $$
 où $Hess$ désigne la hessienne par rapport aux variables $p_1,\dots,p_n$. Nous avons donc trouvé un inverse à droite borné de l'action infinitésimale.
  
  D'après le théorème de Kolmogorov abstrait, pour tout $R \in \Rt_{X,K}$, il existe un automorphisme de Poisson
 tel que
  $$\p(H+R) =H\ \mod  \RM\{ t \} \oplus I^2. $$
  Ceci achève la démonstration du théorème des tores invariants. 
 
 \chapter*{Bibliographie}
 Poincaré découvrit la non-intégrabilité d'abord dans le cas du problème des trois corps dans son article~:\\
 {\sc H. Poincar\'e}, {\em Sur le probl\`eme des trois corps et les \'equations de la dynamique}, {Acta mathematica},	 1890, 13:1, p. 3-270.

Fermi généralisa le théorème de Poincaré dans~:\\
{\sc E. Fermi}, {\em Dimostrazione che in generale un sistema meccanico \`e quasi ergodico,} Il Nuovo
Cimento, 25, p. 267–269, 1923.

voir aussi~:\\
{\sc G. Benettin, G. Ferrari, L. Galgani L et  A. Giorgilli}, {\em An extension of the Poincaré-Fermi theorem on the nonexistence of invariant manifolds in nearly integrable Hamiltonian systems.} Il Nuovo Cimento B, 72:2, p. 137-148, 1982.
 
Le théorème des tores invariants fut annoncé par Kolmogorov dans~:
{\sc A.N. Kolmogorov}, {\em On the conservation of quasi-periodic motions for a small perturbation of the Hamiltonian function}, Dokl. Akad. Nauk SSSR, 98, p. 527-530, 1954.

Contrairement à d'autres articles de cette revue, l'article ne fut pas traduit~(on trouve toutefois une traduction anglaise dans les \oe uvres complètes de Kolmogorov). Ce n'est qu'avec le travail d'Arnold que l'on compris l'importance de cette note~:\\
{\sc V.I. Arnold}, {\em Proof of a theorem of A. N. Kolmogorov on the preservation of conditionally periodic motions under a small perturbation of the Hamiltonian,} Uspehi Mat. Nauk, 18:5, p.13-40, 1963.\\
\rule[0.15cm]{2cm}{0.1pt}{\em\ Small denominators and problems of stability of motion in classical and celestial mechanics},  18:6, p.91-192, 1963.
  
  Les deux articles furent traduits dans les Russian Mathematical Surveys. Cependant la démonstration donnée par Arnold était si confuse, qu'il conseilla au lecteur de construire la sienne en utilisant les idées de son article. On comprend aujourd'hui pourquoi cette  preuve était si complexe~: elle avait un demi-siècle d'avance sur les mathématiques de son époque. \\
 
 Moser proposa d'inclure la théorie dans un schéma général, voir par exemple~:\\
{\sc J. Moser}, {\em A rapidly convergent iteration method and non-linear partial differential equations II}, {Ann. Scuola Norm Sup. Pisa - Classe di Scienze S\'er. 3},{20:3}, p. {499-535}, 1966.
 
Ce travail donna lieu à de nombreux développements (Hamilton, Herman, Zehnder) résumés dans~:\\
{\sc J.-B. Bost}, {\em Tores invariants des systèmes dynamiques hamiltoniens}, Séminaire Bourbaki, 27 (1984-1985), Exposé No. 639.\\

Ce séminaire Bourbaki est caractéristique de la manière de penser la théorie KAM pendant les années quatre-vingts~:
les auteurs se placent dans le cadre de la géométrie différentielle, l'algorithme de Kolmogorov est remplacé par un algorithme de Newton dont on tente de contrôler la convergence par des méthodes subtiles d'analyse réelle.  Le théorème des fonctions implicites ne s'applique pas directement pour deux raisons. Tout d'abord le linéarisé de l'action n'étant inversible que pour les fréquences diophantiennes et non sur un ouvert et, de plus, les hypothèses du théorème rendent impossible l'utilisation de l'exponentielle. Par conséquent, bien que le point de vue géométrique de Moser, que l'on trouve en germe dans Poincaré, soit rendu nettement plus précis après ces travaux, il demeure une heuristique. C'est d'ailleurs toujours le cas si l'on ne fait que des hypothèses $C^\infty$.
  
Pour le point de vue développé dans cet ouvrage~:\\
{\sc J. F\'ejoz et M. Garay}, {\em Un th\'eor\`eme sur les actions de groupes de dimension infinie}, {Comptes Rendus \`a l'Acad\'emie des Sciences}, {348}, {7-8}, p. {427-430}, 2010.\\
{\sc M. Garay}, {\em Degenerations of invariant Lagrangian manifolds}, {Journal of Singularities}, 8, {50-67}, {2014}.\\ 
\rule[0.15cm]{1.8cm}{0.1pt} {\em An Abstract KAM Theorem,} Moscow Math. Journal, 14:4, p.745-772,  2014.
 
 Cette approche est inspirée des méthodes classiques de la géométrie algébrique et analytique, voir par exemple~:\\
 {\sc A. Douady}, {\em Le probl\`eme des modules pour les sous-espaces analytiques compacts d'un espace analytique donn\'e}, {Annales de l'Institut Fourier}, 16, p. 1-95, 1966.
  
 Le théorème affirmant que l'ensemble des vecteurs diophantiens est un ensemble de mesure pleine   a été généralisé dans de nombreuses directions voir par exemple~:\\
 {\sc A.S. Pyartli}, {\em Diophantine approximations on submanifolds of Euclidean space}, Functional Analysis and Its Applications, 3:4, p.303-306,
 1969.\\
 {\sc D.Y. Kleinbock et G.A. Margulis}, {\em Flows on homogeneous spaces and Diophantine approximation on manifolds},
Annals of Mathematics, 148, p.339-360, 1998.\\
{\sc M. Garay},  {\em Arithmetic density}, \`a paraître dans Proceedings of the Edinburgh Mathematical Society, (ArXiv: 1204.2493).

{\bf Crédits photographiques.}
{\em J. Moser} par Konrad Jacobs, Erlangen (1969).\\

 \end{document}